\newtheorem{theorem}{Theorem}[section]
\newtheorem*{theoremA}{Theorem A}
\newtheorem*{theoremB}{Theorem B}
\newtheorem{lemma}[theorem]{Lemma}
\newtheorem{proposition}[theorem]{Proposition}
\newtheorem{corollary}[theorem]{Corollary}
\theoremstyle{definition}
\newtheorem{definition}[theorem]{Definition}
\theoremstyle{remark}
\numberwithin{equation}{section}
\newcommand{\R}{\mathbb{R}}
\newcommand{\N}{\mathbb{N}}
\DeclareMathOperator{\diam}{diam\,}
\DeclareMathOperator{\co}{co}
\DeclareMathOperator{\id}{Id}
\DeclareMathOperator{\Iso}{Iso}
\DeclareMathOperator{\dens}{dens}
\DeclareMathOperator{\ran}{ran}
\DeclareMathOperator{\Span}{span}
\DeclareMathOperator{\Sign}{sign}
\newcommand{\nn}[1]{{\left\vert\kern-0.25ex\left\vert\kern-0.25ex\left\vert #1 
		\right\vert\kern-0.25ex\right\vert\kern-0.25ex\right\vert}}
\renewcommand{\geq}{\geqslant}
\renewcommand{\leq}{\leqslant}
\newcommand{\pten}{\ensuremath{\widehat{\otimes}_\pi}}
\newcommand{\eps}{\varepsilon}
\begin{document}

\title{Daugavet points in projective tensor products}

\author[Dantas]{Sheldon Dantas}
\address[Dantas]{Departament de Matemàtiques and Institut Universitari de Matemàtiques i Aplicacions de Castelló (IMAC), Universitat Jaume I, Campus del Riu Sec. s/n, 12071 Castelló, Spain. \newline
\href{http://orcid.org/0000-0001-8117-3760}{ORCID: \texttt{0000-0001-8117-3760} } }
\email{\texttt{dantas@uji.es}}

\author[Jung]{Mingu Jung}
\address[Jung]{Basic Science Research Institute and Department of Mathematics, POSTECH, Pohang 790-784, Republic of Korea \newline
\href{http://orcid.org/0000-0000-0000-0000}{ORCID: \texttt{0000-0003-2240-2855} }}
\email{\texttt{jmingoo@postech.ac.kr}}

\author[Rueda Zoca]{Abraham Rueda Zoca}
\address[Rueda Zoca]{Universidad de Murcia, Departamento de Matem\'aticas, Campus de Espinardo 30100 Murcia, Spain
 \newline
\href{https://orcid.org/0000-0003-0718-1353}{ORCID: \texttt{0000-0003-0718-1353} }}
\email{\texttt{abraham.rueda@um.es}}
\urladdr{\url{https://arzenglish.wordpress.com}}

\begin{abstract} In this paper, we are interested in studying when an element $z$ in the projective tensor product $X \pten Y$ turns out to be a Daugavet point. We prove first that, under some hypothesis, the assumption of $X \pten Y$ having the Daugavet property implies the existence of a great amount of isometries from $Y$ into $X^*$. Having this in mind, we provide methods for constructing non-trivial Daugavet points in $X \pten Y$. We show that $C(K)$-spaces are examples of Banach spaces such that the set of the Daugavet points in $C(K) \pten Y$ is weakly dense when $Y$ is a subspace of $C(K)^*$. Finally, we present some natural results on when an elementary tensor $x \otimes y$ is a Daugavet point. 
\end{abstract}

\thanks{Sheldon Dantas was supported by the project OPVVV CAAS CZ.02.1.01/0.0/0.0/16\_019/0000778 and by the Estonian Research Council grant PRG877. Mingu Jung was supported by the Basic Science Research Program through the National Research Foundation of Korea (NRF) funded by the Ministry of Education (NRF-2019R1A2C1003857). Abraham Rueda Zoca was supported by was supported by Juan de la Cierva-Formaci\'on fellowship FJC2019-039973, by MTM2017-86182-P (Government of Spain, AEI/FEDER, EU), by MICINN (Spain) Grant PGC2018-093794-B-I00 (MCIU, AEI, FEDER, UE), by Fundaci\'on S\'eneca, ACyT Regi\'on de Murcia grant 20797/PI/18, by Junta de Andaluc\'ia Grant A-FQM-484-UGR18 and by Junta de Andaluc\'ia Grant FQM-0185.}

\subjclass[2010]{Primary 46B04; Secondary 46B25, 46A32, 46B20}
\keywords{Daugavet points; tensor product spaces; $L$-embedded}

\maketitle

\thispagestyle{plain}

\section{Introduction}

In 2001, D. Werner asked if the Daugavet property is stable under tensor products (see \cite[Section 6, Question 3]{W}). More specifically, he asked whether the projective tensor product between two Banach spaces $X \pten Y$ has the Daugavet property whenever $X$ and $Y$ satisfy such a property. It turns out that this is the case whenever $X$ and $Y$ are (isometric) preduals of $L_1$ with the Daugavet property (see \cite{RTV} and more recently \cite{MR}). Nevertheless, in its complete generality, this question seems to be still open. In the present paper, we are interested in the ``localised'' version of problem for Daugavet {\it points}. We say that a norm-one element $x$ is a \emph{Daugavet point} if given a slice $S$ of the unit ball $B_X$, then there exists $y \in S$ satisfying $\|x - y\| \approx 2$ (see \cite{AHLP}). By \cite[Corollary 2.3]{W}, the Banach space $X$ has the Daugavet property if and only if every element of the unit sphere $S_X$ is a Daugavet point. Therefore, the concept of Daugavet points can be faced as a local version of the Daugavet property. See \cite{AHLP,ALMT,HPV,JR} for background on Daugavet points and diverse relevant examples.

Coming back to tensor products, there are natural ways of constructing elementary tensors $x \otimes y$ which are Daugavet points (see Proposition \ref{elementarytensors}). The question here is, therefore, when we can produce tensors (not necessarily elementary) which are Daugavet points. In other words, collecting all the information we have given so far and putting them together, the following question seems to arrive naturally.

\begin{center}
{\it When an element $z \in S_{X \pten Y}$ can be a Daugavet point?}
\end{center}

In order to tackle this problem, let us take a brief moment to explain that the assumption that there are plenty of these points in $X \pten Y$ implies severe restrictions between the Banach spaces $X$ and $Y$. Since a Banach space $X$ with the Daugavet property satisfies the strong diameter two property and it has an octahedral norm (by the proof of \cite[Lemma 3]{S} and by \cite[Lemma 2.8]{KSSW} respectively), the results from Section 3 of \cite{LLR} suggest us that whenever $Y$ is a separable (uniformly convex) Banach space and $X \pten Y$ has the strong diameter 2 property, then $Y$ is isometric to a subspace of $X^*$. With this in mind, it is natural to think that there exist isometries from $Y$ into $X^*$ whenever $X \pten Y$ has the Daugavet property. Proving this conjecture is our first aim in the paper. In what follows, the set $\Iso(Y, X^*)$ is the set of all isometries from $Y$ into $X^*$ (not necessarily surjetive). 

\begin{theoremA} Let $X$ be a Banach space and suppose that $Y$ is a separable uniformly convex Banach space. Assume further that $X \pten Y$ has the Daugavet property. Then, the set $\Iso(Y, X^*)$ is $w^*$-dense in $B_{\mathcal{L}(Y, X^*)}$.	
\end{theoremA}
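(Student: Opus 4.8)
The plan is to reduce the $w^*$-density statement to a finitary one and then to build the required isometries ``by hand'', feeding on the Daugavet property at each step. Since the linear span of the elementary tensors is norm-dense in $X\pten Y$ and $\|T\|,\|T_0\|\le1$, the sets
\[
V\bigl(T_0;(x_i\otimes y_i)_{i\le m};\eta\bigr):=\Bigl\{\,T\in B_{\mathcal L(Y,X^*)}:|\langle Ty_i,x_i\rangle-\langle T_0y_i,x_i\rangle|<\eta\ \text{ for all }i\le m\,\Bigr\},
\]
with $m\in\N$, $\eta>0$, $x_i\in S_X$ and $y_i\in S_Y$, form a base of $w^*$-neighbourhoods of $T_0$ in $B_{\mathcal L(Y,X^*)}$. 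Hence it suffices to show that each such $V$ meets $\Iso(Y,X^*)$.

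Fix such data and a sequence $(w_n)_{n\ge1}$ dense in $S_Y$. I would construct inductively vectors $a_n\in S_X$, numbers $\delta_n\downarrow 0$, and operators $U_n\in B_{\mathcal L(Y,X^*)}$ (with $U_0:=T_0$) such that, for every $n\ge1$, $|\langle U_ny_i,x_i\rangle-\langle T_0y_i,x_i\rangle|<\eta/2$ for all $i\le m$ and $\re\langle U_nw_k,a_k\rangle>1-\delta_k$ for all $k\le n$. Granting this, take a $w^*$-cluster point $T$ of $(U_n)$ in the $w^*$-compact ball $B_{\mathcal L(Y,X^*)}$. Then $T\in V$, because $S\mapsto\langle Sy_i,x_i\rangle$ is $w^*$-continuous; and since $T$ lies in the $w^*$-closure of every tail $\{U_n:n\ge k\}$ and $S\mapsto\langle Sw_k,a_k\rangle$ is $w^*$-continuous, one gets $\|Tw_k\|\ge\re\langle Tw_k,a_k\rangle\ge1-\delta_k$ for every $k$. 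As $\|T\|\le1$, $(w_n)$ is dense in $S_Y$ and $\delta_n\to0$, it follows that $\|Ty\|=\|y\|$ for all $y\in Y$, i.e.\ $T\in\Iso(Y,X^*)$, and the theorem follows.

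So the entire content is in the inductive step: given $U_{n-1}$ (together with $a_1,\dots,a_{n-1}$ and $\delta_1,\dots,\delta_{n-1}$), produce $a_n\in S_X$, a number $\delta_n>0$ as small as we wish, and $U_n\in B_{\mathcal L(Y,X^*)}$ that (essentially) agrees with $U_{n-1}$ on the finite set of tensors $\{x_i\otimes y_i:i\le m\}\cup\{a_k\otimes w_k:k<n\}$ and in addition satisfies $\|U_nw_n\|\approx1$ (then $a_n$ is chosen to nearly norm $U_nw_n$). In other words $U_n$ must be a $w^*$-small but possibly operator-norm-large perturbation of $U_{n-1}$ which pushes the value at $w_n$ essentially onto the unit sphere --- exactly the sort of perturbation provided by the Daugavet property. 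Concretely, I would invoke the standard dual reformulation of the Daugavet property of $X\pten Y$ (for every $\Phi\in S_{(X\pten Y)^*}$, every $w^*$-slice $S$ of $B_{(X\pten Y)^*}$ and every $\varepsilon>0$ there is $\Psi\in S$ with $\|\Phi+\Psi\|>2-\varepsilon$; see \cite{W,KSSW}), applied to a suitable norm-one functional $\Phi$ and to $w^*$-slices chosen so that ``$\Psi$ close to $\Phi$'' encodes ``stay near $U_{n-1}$ on the relevant tensors'' while the slice encodes ``attain norm $\approx1$ at $w_n$''. The uniform convexity of $Y$ is the decisive ingredient here: it forces the $Y$-coordinates $d_k$ of the norming molecules $\sum_k\lambda_k\,c_k\otimes d_k\in S_{X\pten Y}$ delivered by the Daugavet property to cluster, \emph{in norm}, around the prescribed unit vectors $w_1,\dots,w_n$ --- via the elementary fact that in a uniformly convex space a unit vector almost normed by a fixed supporting functional lies close to the point that functional supports --- so that the norm gained at $w_n$ is genuinely realised there and the scalar values already fixed at $w_1,\dots,w_{n-1}$ and at the $y_i$'s are not destroyed. (Separability of $Y$ is used only to provide the dense sequence $(w_n)$.)

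I expect this inductive step to be the main obstacle, and more precisely the reconciliation, inside $B_{\mathcal L(Y,X^*)}$, of the two competing demands: ``do not disturb the finitely many already-fixed scalar values'', i.e.\ stay in the prescribed $w^*$-neighbourhood of $T_0$, against ``raise the norm at the new unit vector $w_n$ up to $1$''. The Daugavet property creates exactly the room needed for this --- the conclusion genuinely fails for, e.g., $X=\R$, so the hypothesis is indispensable --- but turning that room into an admissible perturbation requires a careful choice of the auxiliary functional and slices, and it is there that uniform convexity of $Y$ has to be exploited to localise the Daugavet witnesses at the chosen points. Degenerate configurations (for instance $w_n\in\spann\{w_1,\dots,w_{n-1}\}$, or several $x_i$ attached to a common $y_i$) will presumably also require separate attention.
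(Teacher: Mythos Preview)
Your overall architecture --- build a nested/inductive family inside $B_{\mathcal L(Y,X^*)}$ using the Daugavet property, extract a $w^*$-limit, and check it is an isometry lying in the prescribed neighbourhood --- is exactly the shape of the paper's argument. The difference, and the place where your sketch stalls, is the inductive step.

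In your description the roles are inverted. In the dual Daugavet lemma one is \emph{given} a $w^*$-open set (or slice) $W$ and a functional $\Phi$, and one \emph{obtains} $\Psi\in W$ with $\|\Phi+\Psi\|>2-\varepsilon$; the set $W$ carries the constraints and the pairing with $\Phi$ produces the gain. You write it the other way around (``$\Psi$ close to $\Phi$'' encoding the constraints, the slice encoding $\|U_n w_n\|\approx1$), and that cannot work: ``$\|Tw_n\|>1-\alpha$'' is not a single $w^*$-slice unless you already know a witness $a\in S_X$, which is precisely what you are trying to find. If you repair this and take $\Phi=x^*\otimes w_n^*$ with $w_n^*$ supporting $w_n$, then $\|\Phi+U_n\|>2-\varepsilon$ forces a near-norming elementary tensor $u\otimes v$ with $w_n^*(v)\approx1$, and uniform convexity pushes $v$ toward $w_n$, yielding $\|U_n w_n\|\approx1$. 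At that point you have reproduced the paper's mechanism.

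The paper organises this more cleanly in two ways. First, it enumerates a dense sequence $(y_n^*)$ in $S_{Y^*}$ rather than in $S_Y$, fixes a single $x^*\in S_{X^*}$, and at stage $n$ applies Shvydkoy's lemma to the rank-one operator $x^*\otimes y_n^*$: this gives nested $w^*$-open sets $W_n$ with $\overline{W_{n+1}}^{w^*}\subseteq W_n\subseteq W$ such that every $G\in W_n$ satisfies $\|G+x^*\otimes y_i^*\|>2-1/n$ for all $i\le n$. The intersection $\bigcap_n\overline{W_n}^{w^*}$ is nonempty by $w^*$-compactness, and any $T$ in it satisfies $\|T+x^*\otimes y^*\|=2$ for every $y^*\in S_{Y^*}$. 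Second, all the geometry of $Y$ is deferred to one clean step at the end: for a strongly exposed $y_0\in S_Y$ with exposing functional $y_0^*$, the equality $\|T+x^*\otimes y_0^*\|=2$ yields $(y_n)\subset S_Y$ with $\|Ty_n\|\to1$ and $|y_0^*(y_n)|\to1$, whence $\pm y_n\to y_0$ and $\|Ty_0\|=1$. Since in a uniformly convex space every unit vector is strongly exposed, this gives $T\in\Iso(Y,X^*)\cap W$. No induction on $S_Y$, no bookkeeping of witnesses $a_k$, and no need to thread the geometric argument through every stage.
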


Let us notice that Theorem A not only guarantees the existence of an isometry from $Y$ into $X^*$ when $Y$ is a separable uniformly convex Banach space, but also that the isometries between these spaces are abundant. It is worth mentioning that, up to our knowledge, it is not known whether $X \pten Y$ has the Daugavet property when $X$ is an $L_1$-predual and $Y$ is a finite-dimensional Banach space different from $\ell_1^{\dim(Y)}$.

Under some hypothesis it is possible to present a (theoretical) converse of Theorem A (see Proposition \ref{DPtensors2}). Nevertheless, what we will be interested in is to know when an isometry from $Y$ into $X^*$ yields Daugavet points. Indeed, we have the following result which gives us a way of constructing non-trivial Daugavet points in $X \pten Y$.

\begin{theoremB} Let $z = \sum_{i=1}^n \lambda_i x_i \otimes y_i \in \co(S_X \otimes S_Y)$. Assume that
	\begin{itemize}
	\setlength\itemsep{0.2cm} 
		\item[(1)] $X$ is an $L_1$-predual with the Daugavet property,
		\item[(2)] $\displaystyle \left\| \sum_{j=1}^m \mu_j x_j \right\| = \sum_{j=1}^m |\mu_j|$ for every $\mu_1, \ldots, \mu_m \in \R$, and
		\item[(3)] There exists an isometry $\phi$ from $Y$ into $X^*$. 
	\end{itemize}
	Then, $z \in S_{X \pten Y}$ is a Daugavet point.
\end{theoremB}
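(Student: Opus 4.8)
The goal is to show that $z = \sum_{i=1}^n \lambda_i x_i \otimes y_i$ is a Daugavet point in $X \pten Y$, i.e. for every slice $S = S(B_{X \pten Y}, \Phi, \alpha)$ determined by a norm-one functional $\Phi \in \mathcal{L}(X, Y^*)$ (recall $(X \pten Y)^* = \mathcal{L}(X, Y^*)$) and every $\varepsilon > 0$, there exists $w \in S$ with $\|z - w\| > 2 - \varepsilon$. The plan is to produce $w$ as an elementary tensor $x \otimes y$; this is natural since by Proposition \ref{elementarytensors} elementary tensors built from Daugavet points of $X$ should themselves be Daugavet points, and here every $x \in S_X$ is a Daugavet point because $X$ has the Daugavet property.

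First I would reduce the problem to a statement purely about $X$. Fix $\Phi \in S_{\mathcal{L}(X, Y^*)}$ and $\alpha > 0$. Choose a unit vector $y_0 \in S_Y$ nearly norming $\Phi$, i.e. with $\|\Phi(\,\cdot\,)(y_0)\|$ close to $1$ as a functional on $X$; set $f := \Phi(\,\cdot\,)(y_0) \in X^*$, so $\|f\|$ is close to $1$. The key use of hypothesis (3) is to transport $y_0$ to $X^*$: let $g := \phi(y_0) \in S_{X^*}$. Now I want to find $x \in S_X$ that simultaneously (a) lies in the slice $S(B_X, f, \alpha')$ for a suitable $\alpha'$ — so that $x \otimes y_0 \in S$ — and (b) satisfies $\|z - x \otimes y_0\| > 2 - \varepsilon$. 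For (b), using the isometry $\phi$ I can estimate $\|z - x \otimes y_0\|_{X \pten Y} \geq |\langle z - x\otimes y_0, \Psi\rangle|$ for a cleverly chosen $\Psi \in B_{\mathcal{L}(X,Y^*)}$; the natural choice is to build $\Psi$ from $g = \phi(y_0)$ and an appropriate functional on $X$, so that the pairing reduces to something like $\langle x_i, h_i\rangle$-type terms coming from $z$ together with a term $-\langle x, g\rangle$ coming from $x \otimes y_0$. Here hypothesis (2) — that $\{x_i\}$ spans $\ell_1$ isometrically inside $X$ — together with the $L_1$-predual structure (1) lets me choose functionals on $X$ that are "compatible" on the $x_i$'s and on $x$ simultaneously: the $L_1$-predual property gives enough room (via Lindenstrauss-type/local compact extension arguments, or the fact that finite-dimensional subspaces are nearly contained in isometric copies of $\ell_\infty^k$) to find a norm-one $h \in X^*$ with prescribed values on the finite set $\{x_1,\dots,x_n\}$ and with $h(x)$ close to $-1$ while still keeping $f(x)$ close to $1$.

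Concretely, the mechanism I expect to use is: because $X$ has the Daugavet property and $f \in S_{X^*}$ (approximately), the slice $S(B_X, f, \alpha')$ has diameter $2$, and more precisely for the given point "$\sum \lambda_i$-weighted average of $x_i$" I can find $x$ in that slice far from each $x_i$; then I control $\|z - x\otimes y_0\|$ from below by testing against a functional of the form $h \otimes g$ (i.e. $u \mapsto g(u)\,h(\,\cdot\,)$-type element of $\mathcal{L}(X,Y^*)$, using that $g = \phi(y_0)$ realizes the $Y$-direction isometrically). The $L_1$-predual hypothesis (1) enters to guarantee that $h$ can be chosen to act as $\pm 1$ on all the relevant vectors at once — this is the standard fact that in an $L_1$-predual, finitely many norm-attainment conditions can be met by a single extreme functional — and hypothesis (2) guarantees the relevant averages $\sum \lambda_i x_i$ have norm $1$, so no mass is lost.

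The main obstacle, and the step I would spend the most care on, is the simultaneous control in step two: arranging a single $x \in S_X$ and a single norming functional so that $x$ lies in the prescribed slice $S$ of the big ball while also $\|z - x\otimes y_0\|$ is nearly $2$. The slice $S$ is defined by an arbitrary operator $\Phi$, not by an elementary functional $f \otimes g$, so I must first approximate the behaviour of $\Phi$ near $z$ by such an elementary functional — this is where I will invoke that $z$ is a convex combination of elementary tensors and carefully pick $y_0$ detecting $\Phi$, then handle the error terms. Balancing the three small parameters (the defect of $\|f\|$ from $1$, the slice width $\alpha$, and the target $2-\varepsilon$) while feeding everything through the $L_1$-predual extension argument is the technical heart; once that bookkeeping is set up, the Daugavet property of $X$ and the isometry $\phi$ close the argument.
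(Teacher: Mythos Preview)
Your outline contains a genuine gap at the ``testing functional'' step. You propose to bound $\|z - x\otimes y_0\|$ from below by pairing against a rank-one element $h\otimes g\in \mathcal L(X,Y^*)$. But for any such rank-one operator one has
\[
\langle z,\,h\otimes k\rangle \;=\;\sum_{i=1}^n \lambda_i\,h(x_i)\,k(y_i),
\]
and there is no assumption on the $y_i$'s that would allow a \emph{single} $k\in S_{Y^*}$ to norm each $y_i$ simultaneously (hypothesis (2) concerns only the $x_i$'s). So the quantity above cannot in general be made close to $1$, and the rank-one test fails. The type confusion in your sketch (you set $g=\phi(y_0)\in X^*$ but also want $h\in X^*$, leaving no functional acting on $Y$) is a symptom of this: no elementary tensor in the dual can do the job.

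The paper resolves this by building a \emph{non-rank-one} testing operator. Using the isometry $\phi$ one picks, for each $i$, a point $u_i\in S_X$ with $\phi(y_i)(u_i)>1-\delta$, and similarly $u_0$ for the chosen $y$. One then defines an operator $\varphi$ on $\operatorname{span}\{x_1,\dots,x_n,x\}$ by $x_i\mapsto u_i$, $x\mapsto u_0$; hypothesis (2) together with the Daugavet property of $X$ (in the form of \cite[Lemma~2.8]{KSSW}, which furnishes an $x$ in the prescribed $X$-slice that is almost $\ell_1$-orthogonal to $\operatorname{span}\{x_i\}$) makes $\|\varphi\|\le (1-\delta)^{-1}$. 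The $L_1$-predual hypothesis enters exactly here, via Lindenstrauss' compact-operator extension theorem, to extend $\varphi$ to all of $X$ with essentially the same norm. The correct test is then $T(u\otimes v)=\phi(v)(\varphi(u))$, for which $T(x_i\otimes y_i)=\phi(y_i)(u_i)$ is close to $1$ for every $i$ individually --- something no rank-one $T$ can achieve. Your use of the $L_1$-predual property to extend a \emph{functional} with prescribed values on finitely many points is not what is needed; it must extend an \emph{operator} into $X$.
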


Let us notice that besides the existence of an isometry from $Y$ into $X^*$, the rest of assumptions in Theorem B are required just on the Banach space $X$.

Finally, in Theorem \ref{theo:densiC(K)} and Corollary \ref{C(K)result} we provide an example of a Banach space where Theorem B can be applied, namely $C(K)$-spaces, to produce examples of tensors $C(K)\pten Y$ for which, when $Y$ is isometric to a subspace of $C(K)^*$, the set of Daugavet points is even weakly dense (see Corollary \ref{cor:consefinC(K)}).

Finally, in the way of looking for examples where our main results apply, we discovered that if $X$ is an $L$-embedded space with the metric approximation property and the Daugavet property then $X$ has the \textit{weak operator Daugavet property} (see Definition \ref{def:WODP}) if $\dens(X)\leq \omega_1$. Since the proof is a tricky and technical, and since we think it of interest by itself, we devote Section \ref{sectionanexample} to the proof of this fact.

Before presenting our results, let us give the necessary background the reader needs to following the present paper. All of the Banach spaces throughout the paper are consider to be {\it real}. The closed unit ball and the unit sphere of a Banach space $X$ are denoted by $B_X$ and $S_X$, respectively. We denote by $\mathcal{L}(X, Y)$ the set of all bounded linear operators from $X$ into $Y$. When $X = Y$, we simply denote $\mathcal{L}(X, Y)$ by $\mathcal{L}(X)$. The symbol $\mathcal{B}(X \times Y)$ stands for the bounded bilinear forms from $X \times Y$ into $\R$. We denote by $\mathcal{F}(X, Y)$ all the finite rank operators from $X$ into $Y$. We define a {\it slice} of the unit ball of $X$ by the set
\begin{equation*}
S(B_X, x^*, \alpha) := \left\{ x \in B_X: x^*(x) > \|x^*\| - \alpha \right\},
\end{equation*}
where $x^* \in X^*$ and $\alpha > 0$. We write shortly $S(x^*, \alpha)$ when the space $X$ is understood from the context. 
We denote the convex hull of a set $A$ by $\co(A)$ and its closure $\overline{\co}(A)$.

We say that a Banach space $X$ has the \emph{Daugavet property} if every rank-one operator $T \in \mathcal{L}(X)$ satisfies the equality 
\begin{equation*}
\|\id + T\| = 1 + \|T\|, 
\end{equation*} 
where $\id$ is the identity operator on $X$. By \cite[Lemma 2.1]{KSSW}, a Banach space $X$ has the Daugavet property if and only if for every $\eps > 0$, every point $x \in S_X$, and every slice $S$ of $B_X$, there exists $y \in S$ such that $\|x + y\| > 2 - \eps$. We will use this geometric characterization in the proof of Theorem \ref{DPtensors2}. 
 
Recall that the \emph{projective tensor product}, denoted by $X \pten Y$, of Banach spaces $X$ and $Y$ is the completion of the algebraic tensor product $X \otimes Y$ under the following norm 
\[
\|u \| = \inf \left\{ \sum_{i=1}^n \|x_i \| \|y_i\| : u = \sum_{i=1}^n x_i \otimes y_i \right\}. 
\]
It is well known that $(X \pten Y)^* = \mathcal{B}( X \times Y) = \mathcal{L} (Y, X^*)$ and that $B_{X \pten Y} = \overline{\co} (B_X \otimes B_Y) = \overline{\co} (S_X \otimes S_Y)$ where we write the sets $B_X \otimes B_Y$ and $S_X \otimes S_Y$ for the sets $\{ x \otimes y : x \in B_X, y \in B_Y\}$ and $\{x \otimes y : x \in S_X, y \in S_Y\}$, respectively. We refer the reader to the books \cite{DF,rya} for background on projective tensor products.

\section{The Results}

We start this section by proving Theorem A. 
In fact, what we shall prove is a more general result which says that if $X \pten Y$ satisfies the Daugavet property, the amount of elements in $\mathcal{L}(Y, X^*)$ which attain their norms at every strongly exposed point of $S_Y$ is quite big when $Y^*$ is assumed to be separable and $Y$ has the Radon-Nikod\'ym property (for short, RNP). Let us use $w^*$ to denote the weak-star topology.

\begin{theorem} \label{DPtensors1} Let $X$ and $Y$ be Banach spaces. Assume that $Y^*$ separable and $Y$ has RNP. If $X \pten Y$ has the Daugavet property, then the set
\begin{equation*}
\mathcal{C} := \Big\{ T \in S_{\mathcal{L}(Y, X^*)}: \|T(y)\| = 1 \ \mbox{for every strongly exposed point} \ y \in S_Y \Big\}
\end{equation*}
is $w^*$-dense in $B_{\mathcal{L}(Y, X^*)}$.
\end{theorem}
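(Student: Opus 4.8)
The plan is to build, for a given $T_0 \in B_{\mathcal{L}(Y,X^*)}$, a weak-star neighbourhood $U$ of $T_0$ and an operator $T \in \mathcal{C} \cap U$. Since $Y^*$ is separable, fix a countable dense sequence $(y_n)$ in $S_Y$ and a countable dense set of strongly exposed points — more precisely, by the Radon--Nikod\'ym property the strongly exposed points of $B_Y$ are dense in a set whose closed convex hull is $B_Y$, and we may enumerate a suitable countable subset $(e_k)$ of strongly exposed points of $S_Y$ whose closure still "sees" enough of the ball. A weak-star neighbourhood of $T_0$ in $\mathcal{L}(Y,X^*) = (X \pten Y)^*$ is determined by finitely many elementary tensors $x_1 \otimes u_1, \ldots, x_N \otimes u_N$ and a tolerance $\eta > 0$; I want $T$ with $|\langle T - T_0, x_j \otimes u_j\rangle| < \eta$ for all $j$ and $\|T(e_k)\| = 1$ for every $k$.

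The key mechanism is the Daugavet property of $X \pten Y$, used through the characterization that for every $\eps>0$, every $z \in S_{X\pten Y}$ and every slice $S$ of $B_{X\pten Y}$ there is $w \in S$ with $\|z+w\| > 2-\eps$; equivalently, every slice of $B_{(X\pten Y)^*} = B_{\mathcal{L}(Y,X^*)}$ has diameter $2$, and more usefully the unit ball of the dual has the property that norm-attaining-type functionals are plentiful. I would proceed inductively: starting from $T_0$, and given the first strongly exposed point $e_1$ exposed by some $y_1^* \in S_{Y^*}$, I perturb $T_0$ slightly within the prescribed weak-star neighbourhood to increase $\|T(e_1)\|$ towards $1$, using that a slice of $B_{X \pten Y}$ cut out near $e_1$ (tensored with a norming element of $X$) has a point almost at distance $2$ from a fixed point; dualizing, this produces a small correction to $T_0$ that pushes $\|T(e_1)\|$ up while changing the finitely many test values $\langle T, x_j \otimes u_j\rangle$ by an arbitrarily small amount. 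Iterating over $k=1,2,\ldots$ with a summable sequence of error budgets $(\eps_k)$, and being careful that corrections made at stage $k$ do not destroy the near-norming achieved at earlier stages (this is where one uses that $e_k$ is strongly exposed, so the "near-norm-one" condition is stable under small perturbations controlled by the exposing functional), one obtains a Cauchy sequence of operators converging in operator norm to $T \in B_{\mathcal{L}(Y,X^*)}$ with $\|T(e_k)\| = 1$ for all $k$ and $T$ in the chosen weak-star neighbourhood. Finally, a density/continuity argument extends $\|T(\cdot)\| = 1$ from the countable dense family $(e_k)$ to every strongly exposed point of $S_Y$: any strongly exposed point is a limit, in a strong enough sense governed by its exposing functional, of the $e_k$'s, and $y \mapsto \|T(y)\|$ is $1$-Lipschitz, forcing $\|T(y)\| = 1$.

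The main obstacle I anticipate is the inductive stability: ensuring that the perturbation forcing $\|T(e_{k+1})\| \to 1$ does not undo $\|T(e_j)\| = 1$ for $j \le k$. The resolution should exploit that for a strongly exposed point $e_j$ with exposing functional $y_j^*$, one has $\|T(e_j)\| \ge \langle x_j^{**}, T(e_j)\rangle$ for a suitable norming $x_j^{**}$, and strong exposedness gives a quantitative modulus: if the correction $\Delta T$ has small norm, then $\|(T + \Delta T)(e_j)\|$ stays close to $\|T(e_j)\|$, and by choosing the $k$-th correction with norm below a threshold depending on the moduli of $e_1, \ldots, e_k$ (and on the remaining error budget) one keeps all earlier conditions intact in the limit. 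A secondary technical point is packaging the Daugavet property to yield the needed one-step correction in a genuinely weak-star-small way; here I would lean on the standard fact that in a space with the Daugavet property every slice of the unit ball of the dual is "fat" in the right directions, applied to slices of $B_{\mathcal{L}(Y,X^*)}$ cut by $x \otimes e_k$, together with the RNP of $Y$ guaranteeing that these slices and their exposing data are available for a dense set of points.
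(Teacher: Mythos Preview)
Your scheme has a genuine gap at its core. The inductive step asks for a correction $\Delta T_k$ which is \emph{small in operator norm} (so that the sequence is Cauchy and the earlier conditions $\|T(e_j)\|\approx 1$ survive) and which nevertheless moves $\|T_{k-1}(e_k)\|$ up close to $1$. These two demands are incompatible whenever $\|T_{k-1}(e_k)\|$ is far from $1$, since $\bigl|\|T_k(e_k)\|-\|T_{k-1}(e_k)\|\bigr|\le \|\Delta T_k\|$. Nothing in the Daugavet property produces such a correction: the facts you quote (primal slices contain points almost at distance $2$ from a fixed point; dual slices have diameter $2$) do not ``dualize'' to yield a small-norm operator increment with a prescribed large effect at a given $e_k$. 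Strong exposedness of $e_j$ is a property of the point in $Y$, not of $T$; it gives no extra stability of $\|T(e_j)\|$ under perturbations of $T$ beyond the trivial Lipschitz bound. So the induction, as written, cannot be carried out.

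The paper avoids this obstacle by never trying to force $\|T(e_k)\|=1$ directly. Instead it fixes one $x^*\in S_{X^*}$ and, using the Shvydkoy--Werner mechanism, nests a sequence of nonempty relatively $w^*$-open sets $W_n\subseteq B_{\mathcal L(Y,X^*)}$ so that every $G\in W_n$ satisfies $\|G+x^*\otimes y_i^*\|>2-\tfrac{1}{n}$ for $i\le n$, where $(y_n^*)$ is dense in $S_{Y^*}$ (this is where separability of $Y^*$ enters). The sets $\{G:\|G+H\|>c\}$ are $w^*$-open, so no norm control is needed; $w^*$-compactness of $B_{\mathcal L(Y,X^*)}$ gives a point $T\in\bigcap_n\overline{W_n}^{w^*}$ inside the original $w^*$-neighbourhood. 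One then has $\|T+x^*\otimes y^*\|=2$ for \emph{every} $y^*\in S_{Y^*}$, and for a strongly exposed $y_0$ with exposing functional $y_0^*$ this equality yields a sequence $(y_n)\subset S_Y$ with $\|T(y_n)\|\to 1$ and $|y_0^*(y_n)|\to 1$, hence $\pm y_n\to y_0$ and $\|T(y_0)\|=1$. The key conceptual move you are missing is to replace the non-$w^*$-tractable condition $\|T(y_0)\|=1$ by the $w^*$-tractable surrogate $\|T+x^*\otimes y_0^*\|=2$, and to use compactness rather than a norm-Cauchy construction.
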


\begin{proof} We will follow the ideas of \cite[Lemma 2.12]{KSSW}. Let $W$ be a $w^*$-open subset of $B_{\mathcal{L}(Y, X^*)}$. We will find $T \in \mathcal{C} \cap W$. Since $Y^*$ is a separable space, we may take $\{y_n^*: n \in \N\}$ to be a dense subset of $S_{Y^*}$. Fix $x^* \in S_{X^*}$. Since $X \pten Y$ has the Daugavet property by applying successively \cite[Lemma 3]{S}, there exists a sequence of non-empty $w^*$-open $(W_n)$ with the following properties:

\begin{itemize}
\setlength\itemsep{0.2cm} 
	\item[(a)] $\overline{W_1}^{w^*} \subseteq W$, 
	\item[(b)] $\overline{W_{n+1}}^{w^*} \subseteq W_n$ for $n \in \N$, and 
	\item[(c)] for every $G \in W_n$ and for every $i=1, \ldots, n$, 
	\begin{equation*}
	\|G + x^* \otimes y_i^*\| > 2 - \frac{1}{n}.
	\end{equation*}
 
\end{itemize}
Now, take $T \in \bigcap_{n=1}^{\infty} \overline{W_n}^{w^*} \subseteq W$. Then, we have that $T \in W$ and, by using (b), that $T \in W_n$ for every $n \in \N$. In particular, $\|T\| \leq 1$. The theorem will be established once we can show the following claim.

\vspace{0.2cm} 
\noindent 
{\it Claim:} $T$ attains its norm at every strongly exposed point $y \in S_Y$. 
\vspace{0.2cm}

Let $y_0 \in S_Y$ be an arbitrary strongly exposed point of $B_Y$. By using (c) above, we have that $\|T + x^* \otimes y_n^*\| = 2$ for every $n \in \N$, which implies that $\|T + x^* \otimes y^*\| = 2$ for every $y^* \in S_{Y^*}$. In particular, $\|T + x^* \otimes y_0^*\| = 2$. This implies that there exists a sequence $(y_n) \subset S_Y$ such that
\begin{equation*}
2 - \frac{1}{n} \leq \|T(y_n) + y_0^*(y_n) x^* \| \leq \|T(y_n)\| + |y_0^*(y_n)|
\end{equation*} 
and this implies that $\|T(y_n)\| \longrightarrow 1$ and $|y_0^*(y_n)| \longrightarrow 1$ as $n \rightarrow \infty$. Let $\theta_n \in \{1, -1\}$ so that $y_0^*(\theta_n y_n) = |y_0^*(y_n)|$ for every $n \in \N$. Then, $\theta_n y_n$ converges to $y_0$ since $y_0$ is a strongly exposed point, and this shows that $T$ attains its norm at $y_0$, as we wanted.	
\end{proof}

In what follows (see Proposition \ref{DPtensors2}), we present a converse to Theorem A when the Banach space $X$ satisfies the so-called weak operator Daugavet property (WODP, for short). We are forced to highlight here that Proposition \ref{DPtensors2} will not influence the rest of the paper but from our point of view it has a profound theoretical meaning. In order to prove it, let us present the definition of WODP.

\begin{definition}\cite[Definition 5.2]{MR}\label{def:WODP} We say that the Banach space $X$ has the {\it weak operator Daugavet property} (WODP, for short) if given $\eps > 0$, $x_1, \ldots, x_n \in S_X$, a slice $S$ of $B_X$, and $x' \in B_X$, we can find $x \in S$ and $T \in \mathcal{L}(X)$ such that
	\begin{itemize}
			\setlength\itemsep{0.2cm}
		\item[(a)] $\|T\| \leq 1 + \eps$,
		\item[(b)] $\|T(x_i) - x_i\| < \eps$ for every $i \in \{1, \ldots, n\}$, and 
		\item[(c)] $\|T(x) - x'\| < \eps$.
	\end{itemize}
\end{definition} 
\noindent
We have that if a Banach space $X$ satisfies the WODP, then it satisfies the Daugavet property (see \cite[Remark 5.3]{MR}). Let us present a list of Banach spaces which satisfy such a property. In what follows, MAP stands for the metric approximation property.
\begin{itemize}
\setlength\itemsep{0.2cm} 
	\item[(a)] $L_1$-preduals with the Daugavet property,
	\item[(b)] $L_1(\mu, Z)$-spaces for $\mu$ atomless and $Z$ an arbitrary Banach space,
	\item[(c)] $W_1 \pten W_2$, where $W_1$ and $W_2$ satisfy (a) or (b), and
	\item[(d)] $L$-embedded spaces $X$ which satisfies the MAP and Daugavet property when $\dens(X) \leq \omega_1$.
\end{itemize}
For (a), (b), and (c) we refer to reader to the papers \cite{MR,RTV}. It seems that item (d) is not known in the literature and we present a proof of it in Section \ref{sectionanexample}. Now we are ready to prove Proposition \ref{DPtensors2}

\begin{proposition} \label{DPtensors2} Let $X$ and $Y$ be Banach spaces. Assume that $X$ satisfies the WODP. Suppose further that $\Iso (Y, X^*)$ is norming. Then, $X \pten Y$ has the Daugavet property. 
\end{proposition}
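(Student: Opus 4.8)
The plan is to use the characterization of the Daugavet property via slices and the geometric lemma of Kadets--Shvidkoy--Sirotkin--Werner: it suffices to show that for every $\eps > 0$, every $z \in S_{X \pten Y}$, and every slice $S$ of $B_{X \pten Y}$, there is $w \in S$ with $\|z + w\| > 2 - \eps$. Since $B_{X\pten Y} = \overline{\co}(S_X \otimes S_Y)$, by a standard convexity argument I can first reduce to the case where the slice $S$ is determined by a functional $\Phi \in S_{(X\pten Y)^*} = S_{\mathcal{L}(Y,X^*)}$ and where I am allowed to replace $z$ by a finite convex combination $\sum_i \lambda_i x_i \otimes y_i$ of elementary tensors with $x_i \in S_X$, $y_i \in S_Y$, at the cost of shrinking $\eps$. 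Also, because $\Iso(Y,X^*)$ is norming, I can choose an element of the slice of the form $x_0 \otimes y_0$ with $x_0 \in S_X$, $y_0 \in S_Y$, and moreover pick $\phi \in \Iso(Y,X^*)$ so that $\phi(y_0)$ nearly witnesses the action of $\Phi$ at $x_0 \otimes y_0$; the point of having an \emph{isometry} is that $\|\phi(y_0)\| = 1$, so $x_0 \otimes y_0$ is ``seen'' on $S_{X^*}$ in a norm-preserving way.

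Next I would exploit the WODP of $X$. Apply Definition~\ref{def:WODP} to the points $x_1, \dots, x_n \in S_X$ coming from $z$, to a slice $S'$ of $B_X$ chosen (via Hahn--Banach, using $\phi$ and $\Phi$) so that membership of $x$ in $S'$ forces $x \otimes y_0$ to land in the slice $S$, and to the target point $x' := x_0 \in B_X$. The WODP then yields $x \in S'$ and an operator $T \in \mathcal{L}(X)$ with $\|T\| \leq 1 + \eps$, $\|T(x_i) - x_i\| < \eps$ for all $i$, and $\|T(x) - x_0\| < \eps$. Now consider the operator $T \otimes \id_Y$ on $X \pten Y$ (which has norm $\leq 1+\eps$): it moves $z = \sum_i \lambda_i x_i \otimes y_i$ by less than $\eps$ in norm, and it sends $x \otimes y_0$ to within $\eps$ of $x_0 \otimes y_0$. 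Taking $w := x \otimes y_0 \in S$, I estimate
\[
\|z + w\| \;\gtrsim\; \|(T\otimes\id_Y)(z) + (T\otimes\id_Y)(w)\| \;\gtrsim\; \|z + x_0 \otimes y_0\|,
\]
so it remains to see that $\|z + x_0\otimes y_0\|$ is close to $2$; but this follows because $z$ and $x_0\otimes y_0$ are nearly aligned with $\Phi$ and with the functional $x_0^* \otimes \phi(y_0) \in B_{(X\pten Y)^*}$ chosen above, and the latter separates them with total mass close to $2$. Reversing the direction of $T$ in one of the estimates (bounding $\|z+w\|$ from below by pulling $(T\otimes \id_Y)$ off, using $\|T\otimes\id_Y\|\leq 1+\eps$) gives the clean inequality $\|z + w\| > 2 - C\eps$ for an absolute constant $C$, and a final rescaling of $\eps$ finishes.

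The main obstacle, as I see it, is the bookkeeping in the reduction: one must simultaneously (i) pass from an arbitrary $z \in S_{X\pten Y}$ to a finite convex combination of elementary tensors while keeping control of a witnessing functional, (ii) manufacture the auxiliary slice $S'$ of $B_X$ so that $x \in S' \Rightarrow x \otimes y_0 \in S$, which requires the norming property of $\Iso(Y,X^*)$ together with a careful Hahn--Banach choice of $x_0^*$ with $x_0^*(x_0) \approx 1$ and $\|x_0^* \otimes \phi(y_0)\| \leq 1$, and (iii) track how the operator $T \otimes \id_Y$ distorts everything, so that all the $\eps$-errors accumulate linearly and not catastrophically. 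None of these steps is deep individually, but the choice of $\phi \in \Iso(Y,X^*)$ (rather than a mere norming functional on $Y$) is exactly what makes $x \otimes y_0$ behave like a genuine norm-one ``test vector'' on the $X$-side, and getting that interface right is the crux.
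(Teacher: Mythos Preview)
There is a genuine gap. Your argument reduces, via $T\otimes\id_Y$, to showing $\|z + x_0\otimes y_0\| \approx 2$, where $x_0\otimes y_0$ is an \emph{arbitrary} elementary tensor in the slice $S$. But that inequality is exactly the Daugavet property you are trying to establish, so the justification you offer must fail. Concretely: the slice functional $\Phi$ evaluates to $\approx 1$ on $x_0\otimes y_0$ but has no relation to the arbitrary point $z$; an isometry $\phi$ chosen (via norming) so that $\langle\phi,z\rangle\approx 1$ gives $\langle\phi,\,x_0\otimes y_0\rangle=\phi(y_0)(x_0)$, which has no reason to be close to $1$; and your proposed ``$x_0^*\otimes\phi(y_0)$'' is not even a well-defined element of $(X\pten Y)^*$, since both factors lie in $X^*$. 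No single functional does both jobs.

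The paper's proof repairs exactly this by changing the WODP target. First apply the norming hypothesis \emph{to $z$} (more precisely to a finite approximation $\sum_i x_i\otimes y_i$): obtain $\phi\in\Iso(Y,X^*)$ with $\sum_i\phi(y_i)(x_i)\approx 1$. Next, having fixed any elementary tensor $x_0\otimes y_0\in S$, use that $\phi$ is an isometry to find $u\in B_X$ with $\phi(y_0)(u)\approx 1$; this $u$ is in general \emph{not} in the $X$-slice $S'=\{w\in B_X:\Phi(w,y_0)>1-\alpha\}$. Now invoke WODP with target $x':=u$ (not $x_0$): it yields $x\in S'$ and $T$ with $T(x)\approx u$ and $T(x_i)\approx x_i$. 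The test functional is $B(v,w)=\phi(v)(T(w))$ (equivalently $\phi$ composed with your $T\otimes\id_Y$), and now $B(z)\approx\langle\phi,z\rangle\approx 1$ while $B(x\otimes y_0)=\phi(y_0)(T(x))\approx\phi(y_0)(u)\approx 1$, giving $\|z+x\otimes y_0\|\gtrsim 2$. In short: the WODP target must be the point $u$ that witnesses $\|\phi(y_0)\|=1$, not the slice point $x_0$; with your choice $x'=x_0\in S'$ the WODP step is satisfied trivially by $x=x_0$, $T=\id$, and nothing has been gained.
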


\begin{proof} In order to prove that $X \pten Y$ has the Daugavet property, let us take $z \in S_{X \pten Y}$, $\eps > 0$, and an arbitrary slice $S := S(B_{X \pten Y}, G, \alpha)$ of $B_{X \pten Y}$ with $\|G\| = 1$ and $\alpha > 0$. We will find $x \otimes y \in S$ such that 
\begin{equation*} 
\|z + x \otimes y\| > \frac{2 - \eps}{1 + \eps}. 
\end{equation*} 
For this, consider $x_0 \otimes y \in S$ with $x_0 \in S_X$ and $y \in S_Y$. Let us find $\sum_{i=1}^N x_i \otimes y_i$ so that
\begin{equation} \label{ineq3}
\left\|z - \sum_{i=1}^N x_i \otimes y_i \right\| < \frac{\eps}{12(1 + \eps)}.
\end{equation}
Since $\Iso(Y, X^*)$ is norming, we may find an isometry $\phi \in \mathcal{L}(Y, X^*)$ to be such that
\begin{equation} \label{ineq4}
\sum_{i=1}^N \phi(y_i)(x_i) > 1 - \frac{\eps}{12}
\end{equation}
and, as $\phi$ is an isometry, we can find $u \in B_X$ such that
\begin{equation} \label{ineq5}
\phi(y)(u) > 1 - \frac{\eps}{12}.
\end{equation}
Note that the set $\{z\in B_X: G(z,y)>1-\alpha\}$ is nonempty and defines a slice of $B_X$. Since $X$ has the WODP, we can find an element $x \in \{z\in B_X: G(z,y)>1-\alpha\}$ (which implies that $x\otimes y\in S$) and an operator $T \in \mathcal{L}(X)$ such that
\begin{equation} \label{ineqs}
\|T\| \leq 1 + \widetilde{\eps}, \ \ \ \|T(x) - u\| < \widetilde{\eps}, \ \ \ \mbox{and} \ \ \ \|T(x_i) - x_i\| < \widetilde{\eps},
\end{equation}
for every $i = 1, \ldots, N$, where
\begin{equation*}
0 < \widetilde{\eps} < \min \left\{ \frac{\eps}{12}, \frac{\eps}{6 \sum_{i=1}^N \|y_i\|}\right\}.
\end{equation*}
Define now $B \in \mathcal{B}(Y \times X)$ by $B(v, w) := \phi(v)(T(w))$ for every $v \in Y$ and $w \in X$. Then, $\|B\| \leq 1 + \eps$ and, by using (\ref{ineq5}) and (\ref{ineqs}), we have that 
\begin{equation} \label{ineq6}
B(y, x) = \phi(y)(T(x)) \geq \phi(y)(u) - \|T(x) - u \| > 1 - \frac{\eps}{6}.
\end{equation}
Moreover,
\begin{eqnarray*}
\sum_{i=1}^N B(y_i, x_i) = \sum_{i=1}^N \phi(y_i)(T(x_i)) &=& \sum_{i=1}^N \phi(y_i)(T(x_i) - x_i + x_i) \\
&\stackrel{(\ref{ineqs})}{\geq}& \sum_{i=1}^N \phi(y_i)(x_i) - \widetilde{\eps} \sum_{i=1}^N \|y_i\|	\\
&\stackrel{(\ref{ineq4})}{>}& 1 - \frac{\eps}{4}. 
\end{eqnarray*} 
Therefore, using this last estimate, we have that 
\begin{eqnarray*}
\|z + x \otimes y\| 
&\geq& \left\| \sum_{i=1}^N x_i \otimes y_i + x \otimes y \right\| - \left\|z - \sum_{i=1}^N x_i \otimes y_i \right\| \\
&\stackrel{(\ref{ineq3})}{>}& \left\| \sum_{i=1}^N x_i \otimes y_i + x \otimes y \right\| - \frac{\eps}{12(1 + \eps)} \\
&>& \frac{1}{\|B\|} B \left( \sum_{i=1}^N x_i \otimes y_i + x \otimes y \right) - \frac{\eps}{12(1 + \eps)} \\
&\stackrel{(\ref{ineq5}), (\ref{ineq6})}{>}& \frac{1}{1 + \eps} \left(1 - \frac{\eps}{4} + 1 - \frac{\eps}{6} \right) - \frac{\eps}{12(1 + \eps)} \\
&>& \frac{2 - \eps}{1 + \eps}.
\end{eqnarray*}
\end{proof}


Arguing similarly to Theorem \ref{DPtensors2}, it is possible to present another (again theoretical) sufficient condition so that $X \pten Y$ satisfies the Daugavet property. 

\begin{theorem} \label{DPtensors3} Let $X$ be a Banach space with the WODP and $Y$ be a Banach space with the Radon-Nikod\'ym property. If the set
\begin{equation*}
\mathcal{C} := \Big\{ T \in S_{\mathcal{L}(Y, X^*)}: \|T(y)\| = 1 \ \mbox{for every strongly exposed point} \ y \in S_Y \Big\}
\end{equation*}
is norming. Then, $X \pten Y$ has the Daugavet property.
\end{theorem}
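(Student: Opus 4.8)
The plan is to mimic the proof of Proposition \ref{DPtensors2} almost verbatim, replacing the use of the norming set $\Iso(Y,X^*)$ by the norming set $\mathcal{C}$, and inserting one extra step to exploit the Radon--Nikod\'ym property of $Y$. First I would fix $z\in S_{X\pten Y}$, $\eps>0$, and an arbitrary slice $S=S(B_{X\pten Y},G,\alpha)$ with $\|G\|=1$; pick an elementary tensor $x_0\otimes y\in S$ with $x_0\in S_X$, $y\in S_Y$; and approximate $z$ by a finite sum $\sum_{i=1}^N x_i\otimes y_i$ as in \eqref{ineq3}. Since $\mathcal{C}$ is norming for $X\pten Y$ (equivalently, for $\mathcal{L}(Y,X^*)=(X\pten Y)^*$), I can choose $T_0\in\mathcal{C}$ with $\sum_{i=1}^N T_0(y_i)(x_i)>1-\tfrac{\eps}{12}$, playing the role that $\phi$ played before.

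The one genuinely new ingredient is producing the analogue of \eqref{ineq5}, i.e. a point $u\in B_X$ with $T_0(y)(u)$ close to $1$. In Proposition \ref{DPtensors2} this was immediate because $\phi$ was an isometry, so $\|\phi(y)\|=1$; here $T_0\in\mathcal{C}$ only guarantees $\|T_0(y)\|=1$ at \emph{strongly exposed} points $y$, and the $y$ coming from the slice $S$ need not be one. This is where RNP enters: by the Radon--Nikod\'ym property, $B_Y$ is the closed convex hull of its strongly exposed points (Phelps), and more to the point every slice of $B_Y$ contains a strongly exposed point. So rather than starting from an arbitrary elementary tensor in $S$, I would first note that $\{w\in B_X: G(w,y)>1-\alpha\}$ being a nonempty slice of $B_X$ is an open condition, and similarly shrink things so that the second coordinate $y$ can be taken to be a strongly exposed point of $B_Y$ lying in a suitable slice of $B_Y$ determined by $G$ — using that $G$, viewed as an operator $Y\to X^*$ of norm one, is norm-one-approximately attained, so there is $y'\in S_Y$ and $w'\in B_X$ with $G(w',y')$ close to $1$, and then by RNP we can perturb $y'$ to a strongly exposed point $y\in S_Y$ still witnessing $G(w,y)>1-\alpha$ for an appropriate $w\in B_X$. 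For such a strongly exposed $y$ we have $\|T_0(y)\|=1$, hence there is $u\in B_X$ with $T_0(y)(u)>1-\tfrac{\eps}{12}$, recovering \eqref{ineq5}.

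From that point the argument is identical to Proposition \ref{DPtensors2}: apply the WODP of $X$ to the slice $\{w\in B_X: G(w,y)>1-\alpha\}$, the finitely many points $x_1,\dots,x_N$, and the target $u$, obtaining $x\in S$ (so $x\otimes y\in S$) and $T\in\mathcal{L}(X)$ with $\|T\|\le 1+\widetilde\eps$, $\|T(x)-u\|<\widetilde\eps$, and $\|T(x_i)-x_i\|<\widetilde\eps$ for a small enough $\widetilde\eps$ depending on $\eps$ and $\sum_i\|y_i\|$; define $B\in\mathcal B(Y\times X)$ by $B(v,w):=T_0(v)(T(w))$, note $\|B\|\le 1+\eps$, and run the same chain of inequalities to conclude $\|z+x\otimes y\|>\tfrac{2-\eps}{1+\eps}$. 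By the Daugavet characterization from \cite[Lemma 2.1]{KSSW}, this shows $X\pten Y$ has the Daugavet property.

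The main obstacle, and the only place real care is needed, is the reduction in the second paragraph: justifying that one may assume the second coordinate $y$ of the elementary tensor in the slice $S$ is a strongly exposed point of $B_Y$. One must be careful that the bilinear functional $G$ genuinely ``sees'' a strongly exposed point of $B_Y$ with a large value — this is exactly the kind of computation carried out in the RNP parts of \cite{LLR} and in the proof of Theorem \ref{DPtensors1}, where $y_0$ strongly exposed forced $\theta_ny_n\to y_0$. I expect a clean way is: since $\mathcal C$ is norming, in particular $G$ is approximated in the relevant open set by operators $T_0\in\mathcal C$, and for $T_0\in\mathcal C$ the function $y\mapsto\|T_0(y)\|$ equals $1$ on a weakly dense (by RNP) set of points of $S_Y$; combined with the openness of the slice condition $G(\cdot,y)>1-\alpha$ in $y$, one gets a strongly exposed $y$ doing the job. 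Everything else is the routine recycling of the Proposition \ref{DPtensors2} estimates.
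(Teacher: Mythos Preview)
Your proposal is correct and follows exactly the approach the paper intends: the paper gives no proof of Theorem~\ref{DPtensors3} beyond the remark ``arguing similarly to Theorem~\ref{DPtensors2}'', and your paragraphs~1--3 supply precisely those details, with the one genuinely new step (choosing the second coordinate $y$ of the elementary tensor in $S$ to be a strongly exposed point of $B_Y$) correctly identified and handled via the RNP.

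One comment: your fourth paragraph is unnecessary worry, and the alternative justifications sketched there are off. The clean argument is already in your second paragraph. Having chosen $x_0\otimes y\in S$ with $G(x_0,y)>1-\alpha$, the set $\{v\in B_Y: G(x_0,v)>1-\alpha\}$ is a nonempty slice of $B_Y$; since $Y$ has the RNP, $B_Y=\overline{\co}(\text{str.\,exp.}\,B_Y)$, and a one-line separation argument shows every slice of $B_Y$ must meet the set of strongly exposed points. Replace $y$ by such a strongly exposed $y'$ (automatically in $S_Y$), keep $x_0$, and proceed. By contrast, ``$\mathcal C$ norming'' does \emph{not} mean $G$ is approximated by elements of $\mathcal C$ in any topology, and the strongly exposed points of $B_Y$ are not in general weakly dense in $S_Y$; drop those remarks.
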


By Theorem \ref{DPtensors2}, we can see that the assumption that the set $\Iso(Y, X^*)$ is norming has a strong connection with the Daugavet property in the projective tensor product $X \pten Y$. We will see next that the existence of at least one isometry already implies the existence of a Daugavet point $z \in S_{X \pten Y}$ under WODP assumption. This is our first result which shows a way of constructing non-trivial Daugavet points in the projective tensor product.

\begin{theorem} \label{Dpfortensors3} Let $X$ and $Y$ be a Banach spaces. Assume that $X$ satisfies the WODP. Given an element $z = \sum_{i=1}^n \lambda_i x_i \otimes y_i \in \co(S_X \otimes S_Y)$, suppose that there exists an isometry $\phi \in \mathcal{L}(Y, X^*)$ so that $\phi(y_i)(x_i) = 1$ for every $i = 1, \ldots, n$. Then, $z \in S_{X \pten Y}$ is a Daugavet point.
\end{theorem}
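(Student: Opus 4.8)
\textbf{Proof plan for Theorem \ref{Dpfortensors3}.}

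The plan is to show that $z$ is a Daugavet point by the standard slice characterization: given $\varepsilon>0$ and a slice $S(B_{X\pten Y},G,\alpha)$ with $\|G\|=1$, I must produce $w\in S$ with $\|z-w\|>2-\varepsilon$. Since $z$ has norm one (it is a convex combination of norm-one elementary tensors, and the witnessing bilinear form $\phi$, viewed via $G_\phi(x,y)=\phi(y)(x)$, evaluates to $1$ on $z$ because $\phi(y_i)(x_i)=1$ for all $i$), the real content is to make the difference close to $2$. The key idea is to mimic the proof of Proposition \ref{DPtensors2}, but now instead of building $\|z+w\|$ large I build $\|z-w\|$ large, using that $-\phi$ is also an isometry. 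First I would note that the set $\{x\in B_X:G(x,y_0)>1-\alpha\}$ for a suitable $y_0$ is a nonempty slice of $B_X$ — one picks $x_0\otimes y_0\in S$ with $x_0\in S_X$, $y_0\in S_Y$ — so I can apply the WODP of $X$ inside it.

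The key steps, in order: (i) Fix $x_0\otimes y_0\in S$ with $x_0,y_0$ of norm one, so that $\{x\in B_X:G(x,y_0)>1-\alpha\}$ is a nonempty slice of $B_X$. (ii) Using that $\phi$ is an isometry, choose $u\in B_X$ with $\phi(y_0)(u)<-1+\widetilde\varepsilon$ for a tiny $\widetilde\varepsilon$ to be fixed, i.e. $u$ nearly maximizes $-\phi(y_0)$. (iii) Apply the WODP to $X$ with the finite set $\{x_1,\dots,x_n\}$ (the components of $z$), the slice $\{x\in B_X:G(x,y_0)>1-\alpha\}$, and the target point $u$: this yields $x\in S$ with $x\otimes y_0\in S$ and an operator $T\in\mathcal L(X)$ with $\|T\|\le 1+\widetilde\varepsilon$, $\|T(x_i)-x_i\|<\widetilde\varepsilon$, and $\|T(x)-u\|<\widetilde\varepsilon$. (iv) Define $B\in\mathcal B(Y\times X)$ by $B(v,w):=\phi(v)(T(w))$, so $\|B\|\le 1+\widetilde\varepsilon$; then $B(y_0,x)=\phi(y_0)(T(x))\le\phi(y_0)(u)+\widetilde\varepsilon<-1+2\widetilde\varepsilon$, while $\sum_i\lambda_i B(y_i,x_i)=\sum_i\lambda_i\phi(y_i)(T(x_i))\ge\sum_i\lambda_i\phi(y_i)(x_i)-\widetilde\varepsilon\sum_i\lambda_i\|y_i\|>1-C\widetilde\varepsilon$. (v) Conclude: $\|z-x\otimes y_0\|\ge \frac{1}{\|B\|}B(z-x\otimes y_0)=\frac{1}{\|B\|}\bigl(\sum_i\lambda_i B(y_i,x_i)-B(y_0,x)\bigr)>\frac{1}{1+\widetilde\varepsilon}(1-C\widetilde\varepsilon+1-2\widetilde\varepsilon)>2-\varepsilon$ once $\widetilde\varepsilon$ is chosen small enough depending on $\varepsilon$ and $\sum_i\lambda_i\|y_i\|$. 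Since $x\otimes y_0\in S$, this is the required witness.

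The main obstacle — and really the only subtle point — is guaranteeing that we may take the \emph{same} slice of $B_X$ (namely $\{x\in B_X:G(x,y_0)>1-\alpha\}$) to simultaneously produce an element $x$ of the slice of $X\pten Y$ and feed it into the WODP of $X$; this forces us to fix $y_0$ first (from the chosen elementary tensor in $S$) and work with the corresponding slice of $B_X$, exactly as in Proposition \ref{DPtensors2}. One should also double-check that $z\in S_{X\pten Y}$: the inequality $\|z\|\le 1$ is clear from $z\in\co(S_X\otimes S_Y)\subseteq B_{X\pten Y}$, and $\|z\|\ge 1$ follows since the norm-one functional $w\mapsto\phi(\pi_Y w)(\pi_X w)$ — more precisely the bilinear form $(x,y)\mapsto\phi(y)(x)$, which has norm $\le\|\phi\|=1$ — takes the value $\sum_i\lambda_i\phi(y_i)(x_i)=\sum_i\lambda_i=1$ at $z$. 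Everything else is the routine triangle-inequality bookkeeping of a WODP argument, essentially identical to the one already carried out in the proof of Proposition \ref{DPtensors2}, with the sign of $\phi$ flipped.
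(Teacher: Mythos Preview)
Your proposal is correct and follows essentially the same argument as the paper's proof: fix $x_0\otimes y_0\in S$, pass to the slice $\{x\in B_X:G(x,y_0)>1-\alpha\}$ of $B_X$, apply WODP with target a point $u$ almost norming $\phi(y_0)$, and test against the bilinear form $B(v,w)=\phi(v)(T(w))$. The only cosmetic difference is a sign --- the paper picks $u_0$ with $\phi(y_0)(u_0)>1-\varepsilon$ and concludes $\Vert z+u_1\otimes y_0\Vert>2-\varepsilon'$, whereas you pick $u$ with $\phi(y_0)(u)<-1+\widetilde\varepsilon$ and conclude $\Vert z-x\otimes y_0\Vert>2-\varepsilon$; these are equivalent since a point is Daugavet if and only if its negative is (and slices are closed under negation). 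A small simplification: since $y_i\in S_Y$ and $\sum_i\lambda_i=1$, your constant $C=\sum_i\lambda_i\Vert y_i\Vert$ is just $1$.
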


\begin{proof} Let $\eps > 0$ be given and let us fix a slice $S := S(B_{X \pten Y}, G, \alpha)$ of $B_{X \pten Y}$ with $\|G\| = 1$ and $\alpha > 0$. Let $x_0 \otimes y_0 \in S$ with $x_0 \in S_X$, $y_0 \in S_Y$. Since $\phi$ is an isometry, we may take $u_0 \in B_X$ such that
	\begin{equation*}
	\phi(y_0)(u_0) > 1 - \eps. 
	\end{equation*}
	Since $X$ has the WODP, we can find $T \in \mathcal{L}(X)$ and $u_1 \in \{z\in B_X: G(z,y_0)>1-\alpha\}$ (which implies $u_1\otimes y_0\in S)$ such that 
	\begin{equation*}
	\|T\| \leq 1 + \eps, \ \ \ \|T(u_1) - u_0\| < \eps, \ \ \ \mbox{and} \ \ \ \ \|T(x_i) - x_i\| < \eps, 
	\end{equation*} 	
	for every $i \in \{1, \ldots, n\}$. Define now $B \in \mathcal{B}(Y \times X)$ by $B(y, x) := \phi(y)(T(x))$ for every $y \in Y$ and $x \in X$. Then, $\|B\| \leq 1 + \eps$. Moreover,
	\begin{eqnarray*}
		B(y_0, u_1) = \phi(y_0)(T(u_1)) &=& \phi(y_0)(T(u_1) - u_0 + u_0) \\
		&\geq& \phi(y_0)(u_0) - \|T(u_1) - u_0\| \\
		&>& 1 - 2 \eps. 	
	\end{eqnarray*} 
	Then, since $\phi(y_i)(x_i) = 1$ for every $i=1,\ldots, n$, we have that
	\begin{eqnarray*}
		B(z + u_1 \otimes y_0) 
		&=& \sum_{i=1}^n \lambda_i \phi(y_i)(T(x_i)) + B(y_0, u_1) \\
		&>& \sum_{i=1}^n \lambda_i( \phi(y_i)(x_i) - \|T(x_i) - x_i\|) + 1 - 2 \eps \\
		&>& \sum_{i=1}^n \lambda_i \phi(y_i)(x_i) - \eps +1 - 2 \eps	\\
		&=& 2 - 3 \eps. 
	\end{eqnarray*}	
	Therefore
	\begin{equation*}
	\|z + u_1 \otimes y_0\| \geq \|B\|^{-1}(2 - 3 \eps) > \frac{2 - 3\eps}{1 + \eps}.
	\end{equation*} 
	This proves that $z \in S_{X \pten Y}$ is a Daugavet point.	
\end{proof}

We prove now Theorem B. It gives us a condition so that an element $z = \sum_{i=1}^n \lambda_i x_i \otimes y_i \in \co(S_X \otimes S_Y)$ is a Daugavet point under the hypothesis that $X$ is an $L_1$-predual with the Daugavet property and $Y$ is an isometric subspace of $X^*$. Comparing this to Theorem \ref{Dpfortensors3}, we now assume that $\{x_1, \ldots, x_n\}$ is equivalent to the canonical basis of $\ell_1^n$.

\begin{proof}[Proof of Theorem B] Let $S := S(B_{X \pten Y}, G, \alpha)$ be a non-empty slice and $\eps > 0$ be given, where $G \in (X \pten Y)^* = \mathcal{L}(X, Y^*)$ and $\alpha > 0$. Let $z = \sum_{i=1}^n \lambda_i x_i \otimes y_i \in \co(S_X \otimes S_Y)$. We will find an element $x \otimes y \in S$ with
	\begin{equation} \label{dp2} 
	\|z + x \otimes y\| > 2 - \eps.
	\end{equation}	
	Let us consider an arbitrary $x_0 \otimes y \in S$ and let $\delta > 0$ be small enough so that 
	\[
	\frac{2(1-\delta)^2}{1+\delta} > 2 - \eps.
	\]
	
	Since $\phi \in \mathcal{L}(Y, X^*)$ is an isometry, we can find $u_0, u_1, \ldots, u_n \in S_X$ such that 
	\begin{equation} \label{dp3} 
	\phi(y_i)(u_i) > 1 - \frac{\delta}{n} \ \ \ \mbox{and} \ \ \ \phi(y)(u_0) > 1 - \delta. 
	\end{equation}
	Consider the set $\{z\in B_X: G(z,y)>1-\alpha\}$, which defines a slice of $B_X$. Since $X$ has the Daugavet property (see \cite[Lemma 2.8]{KSSW}), there exists $x \in \{z\in B_X: G(z,y)>1-\alpha\}$ (which implies $x\otimes y\in S$) such that
	\begin{equation} \label{dp4} 
	\|y + \mu x \| \geq (1 - \delta)(\|y\| + |\mu|)
	\end{equation}
	for every $y \in \Span \{x_1, \ldots, x_n\}$ and $\mu \in \R$. Define $\varphi \in \mathcal{L}( \Span\{x_1,\ldots, x_n,x\}, X)$ by
	\begin{equation*}
	\varphi \left( \sum_{j=1}^m \mu_j x_j + \mu x \right) := \sum_{j=1}^m \mu_j u_j + \mu u_0.
	\end{equation*}
	Then, for every $\sum_{j=1}^m \mu_j x_j + \mu x \in \Span \{x_1, \ldots, x_n, x\}$, we have 
	\begin{eqnarray*}
		\left\| \varphi \left( \sum_{j=1}^m \mu_j x_j + \mu x \right) \right\| 
		&\leq& \sum_{j=1}^m |\mu_j| + |\mu| \\
		&=& \left\| \sum_{j=1}^m \mu_j x_j \right\| + |\mu| \\
		&\stackrel{(\ref{dp4})}{\leq }& \frac{1}{1 - \delta} \left\| \sum_{j=1}^m \mu_j x_j + \mu x \right\|. 
	\end{eqnarray*} 
	This shows that $\|\varphi\| \leq \frac{1}{1 - \delta}$. Since $\varphi$ is a compact operator and $X$ is an $L_1$-predual, we can consider $\varphi$ defined on the whole $X$ with $\| \varphi \| \leq \frac{1+\delta}{1-\delta}$ (see \cite[Theorem 6.1, (3)]{L}). Now, define $T \in (X \pten Y)^* = \mathcal{L}(X, Y^*)$ by
	\begin{equation*}
	T(u \otimes v) := \phi(v)(\varphi(u)) \ \ (u \in X, v \in Y).
	\end{equation*}
	So, we have that $\Vert T\Vert\leq \frac{1+\delta}{1-\delta}$ and that $T (x \otimes y) = \phi (y) (u_0) > 1 - \delta$. Therefore,
	\begin{eqnarray*}
		\|z + x \otimes y\| \geq \frac{1-\delta}{1+\delta}(T(z + x \otimes y)) 
		&\stackrel{(\ref{dp3})}{>}& \frac{1-\delta}{1+\delta} \left( \sum_{i=1}^n \lambda_i \left(1 - \frac{\delta}{n} \right) + (1 - \delta) \right) \\
		&=& \frac{1-\delta}{1+\delta} \left(1 - \frac{\delta}{n} + 1 - \delta \right) \\
		&>& \frac{1-\delta}{1+\delta}(2 - 2 \delta) \\
		&>& 2- \eps,
	\end{eqnarray*} 	
	so (\ref{dp2}) is established. 
\end{proof}

Let us exhibit examples where Theorem B applies to find projectve tensor products with a weakly dense set of Daugavet points. To do so, let us start with the following lemma.

\begin{lemma}
	Let $K$ be a compact Hausdorff topological space with no isolated point. Let $f_1,\ldots, f_n\in S_{\mathcal C(K)}$ and $A\subseteq K$ be a non-empty open subset. Then there are $g_1,\ldots, g_n\in S_{\mathcal C(K)}$ such that
	\begin{enumerate}
	\setlength\itemsep{0.2cm} 
		\item $g_i=f_i$ on $K\setminus A$.
		\item $\displaystyle \left\| \sum_{i=1}^n \lambda_i g_i \right\| =\sum_{i=1}^n \vert \lambda_i\vert$ holds for every $\lambda_1,\ldots, \lambda_n\in\mathbb R$.
	\end{enumerate}
\end{lemma}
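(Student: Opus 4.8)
The plan is to use the freedom in the isolated-point-free space $K$ to push all the "mass" of the functions $f_i$ onto a fixed small region inside $A$, and then to perturb the $f_i$ there so that a single common point witnesses the $\ell_1^n$-norm for every choice of signs $\lambda_1,\dots,\lambda_n$. First I would observe that since $K$ has no isolated point, the open set $A$ contains $2^n$ pairwise disjoint non-empty open subsets $U_\sigma$, indexed by $\sigma\in\{-1,1\}^n$, each with a point $t_\sigma\in U_\sigma$. By Urysohn's lemma, for each $\sigma$ pick a continuous $\varphi_\sigma\colon K\to[0,1]$ with $\varphi_\sigma(t_\sigma)=1$ and $\varphi_\sigma$ supported in $U_\sigma$; also fix a continuous $\psi\colon K\to[0,1]$ with $\psi=0$ on a neighbourhood of each $t_\sigma$ and $\psi=1$ on $K\setminus A$ (possible since the finite set $\{t_\sigma\}$ has a neighbourhood whose closure is contained in $A$).

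Next I would define
\[
g_i := \psi\, f_i + \sum_{\sigma\in\{-1,1\}^n}\bigl(1-\psi(t_\sigma)\text{-type correction}\bigr)\,\sigma_i\,\varphi_\sigma,
\]
or, more cleanly, $g_i := \psi f_i + (1-\psi)\sum_{\sigma}\sigma_i\varphi_\sigma$. Because the $U_\sigma$ are disjoint and $\varphi_\sigma$ is supported in $U_\sigma$, at the point $t_\sigma$ we get $g_i(t_\sigma)=\sigma_i$ (here $\psi(t_\sigma)=0$), so $\sum_i\lambda_i g_i(t_\sigma)=\sum_i\lambda_i\sigma_i$; choosing $\sigma_i=\operatorname{sign}(\lambda_i)$ gives the lower bound $\|\sum_i\lambda_i g_i\|\ge\sum_i|\lambda_i|$. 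The reverse inequality needs $\|g_i\|\le 1$, i.e. $\|g_i\|_\infty=1$: on $K\setminus A$, $g_i=f_i$ so $|g_i|\le 1$; on each $U_\sigma$, $g_i$ is a convex combination (weights $\psi$ and $1-\psi$) of $f_i$ (modulus $\le 1$) and $\sigma_i\varphi_\sigma$ (modulus $\le 1$), hence $|g_i|\le 1$; and outside $A$ and the $U_\sigma$ we simply have $g_i = \psi f_i$, still of modulus $\le 1$. To get $\|g_i\|_\infty=1$ exactly rather than $\le 1$, note $g_i(t_{\sigma})=\pm1$ for the sign pattern with $\sigma_i=1$. Finally $g_i=f_i$ on $K\setminus A$ holds by construction since $\psi=1$ and $\varphi_\sigma=0$ there, giving property (1), and the displayed norm identity is property (2).

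The main technical point — and the step I would be most careful about — is arranging $\psi$ and the $\varphi_\sigma$ so that simultaneously (a) $g_i$ agrees with $f_i$ off $A$, (b) each $g_i$ stays in $B_{\mathcal C(K)}$, and (c) there really is a point where $g_i$ takes the prescribed value $\sigma_i$ with the other functions unconstrained at that point. The disjointness of the $U_\sigma$ is what makes (c) compatible across all $2^n$ sign patterns at once, and the convex-combination structure $g_i=\psi f_i+(1-\psi)(\sigma_i\varphi_\sigma)$ on $U_\sigma$ is what keeps $\|g_i\|\le 1$; one just has to check that $\psi$ can be chosen to vanish near every $t_\sigma$ while equalling $1$ on $K\setminus A$, which is exactly where the no-isolated-point hypothesis (ensuring $A$ is "big enough" to contain the $t_\sigma$ with room to spare) is used. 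Everything else is a routine verification of the two displayed properties.
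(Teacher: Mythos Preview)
Your proposal is correct and follows essentially the same approach as the paper: choose $2^n$ points $t_\sigma$ in pairwise disjoint open subsets of $A$ indexed by sign patterns $\sigma\in\{-1,1\}^n$, build Urysohn bumps $\varphi_\sigma$ and a cutoff $\psi$ (called $f_\sigma$ and $h$ in the paper), and define $g_i$ so that $g_i=f_i$ off $A$ while $g_i(t_\sigma)=\sigma_i$; then evaluate at $t_{(\operatorname{sign}\lambda_1,\ldots,\operatorname{sign}\lambda_n)}$ for the $\ell_1$ lower bound. Your formula $g_i=\psi f_i+(1-\psi)\sum_\sigma\sigma_i\varphi_\sigma$ is a minor variant of the paper's $g_i=(1-\sum_\sigma\sigma_i f_\sigma)f_ih+\sum_\sigma\sigma_i f_\sigma$, and your convex-combination verification of $\|g_i\|_\infty\le 1$ is arguably cleaner than the paper's case split, but the underlying idea is identical.
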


\begin{proof}
	Let $\mathcal P:=\{\sigma=(\sigma_1,\ldots, \sigma_n): \sigma_i\in\{-1,1\}\}$.
	
	Since $A$ is a non-empty open set in $K$, which does not contain any isolated point, then $A$ is infinite, so take $\{t_\sigma: \sigma\in \mathcal P\}\subseteq A$ different points. Since they are different points we can find open set $V_\sigma$ such that $t_\sigma\in V_\sigma\subseteq \overline{V_\sigma}\subseteq A$ and such that $\overline{V_\sigma}\cap \overline{V_\nu}=\emptyset$ if $\sigma\neq \nu\in\mathcal P$. By Urysohn's lemma we can find, for every $\sigma\in\mathcal P$, a function $0\leq f_\sigma\leq 1$ with $f_\sigma=0$ on $K\setminus V_\sigma$ and $f_\sigma(t_\sigma)=1$. Also find by Urysohn's lemma a function $0\leq h\leq 1$ such that $h=1$ on $K\setminus A$ and $h=0$ on $\bigcup\limits_{\sigma\in \mathcal P} \overline{V_\sigma}$. Define, for every $i\in\{1,\ldots, n\}$, a continuous function $g_i:K\longrightarrow \mathbb R$ by the equation
	$$g_i:=\left(1-\sum_{\sigma\in\mathcal P}\sigma_i f_\sigma\right)f_i h+\sum_{\sigma\in\mathcal P}\sigma_i f_\sigma.$$
	Let us prove that $\Vert g_i\Vert\leq 1$. To this end, pick $t\in K$. Then we have two possibilities:
	\begin{enumerate}
		\item If $t\in V_\sigma$ for some (unique) $\sigma$, then $h(t)=0$ and so 
		$$\vert g_i(t)\vert=\vert \sigma_i f_\sigma(t)\vert=\vert \sigma_i\vert \vert f_\sigma(t)\vert\leq \Vert f_\sigma\Vert_\infty=1.$$
		\item If $t\notin \bigcup\limits_{\sigma\in \mathcal P}V_\sigma$ then $\sum_{\sigma\in\mathcal P}\sigma_i f_\sigma(t)=0$ and so
		$$\vert g_i(t)\vert=\vert f_i(t)\vert\vert h_i(t)\vert\leq \Vert f_i\Vert=1.$$
	\end{enumerate}
	Taking maxima on $K$ we get that $\Vert g_i\Vert\leq 1$. Moreover, since $h=1$ and $\sum_{\sigma\in\mathcal P}\sigma_i f_\sigma=0$ on $K\setminus A$ we get that $g_i=f_i$ on $K\setminus A$, and (1) is proved.
	
	Let us finally prove (2). To this end pick any $\lambda_1,\ldots, \lambda_n\in\mathbb R$ and define $\sigma:=(\Sign(\lambda_1),\ldots, \Sign(\lambda_n))$. Then, by definition
	$$g_i(t_\sigma)=\sigma_if_\sigma(t_\sigma)
	=\Sign(\lambda_i).$$
	So
	\[
	\left\Vert \sum_{i=1}^n \lambda_i g_i\right\Vert \geq \sum_{i=1}^n \lambda_i g_i(t_\sigma)=\sum_{i=1}^n \lambda_i \Sign (\lambda_i)=\sum_{i=1}^n \vert \lambda_i\vert.
	\]
	which proves (2).
\end{proof}

Now we are able to prove the following theorem.

\begin{theorem}\label{theo:densiC(K)}
	Let $K$ be a compact Hausdorff topological space with no isolated point. Pick $f_1,\ldots, f_n\in S_{\mathcal C(K)}$. Then, for every $i\in\{1,\ldots, n\}$, there is a sequence $\{g_k^i\}\subseteq S_{\mathcal C(K)}$ which weakly converges to $f_i$ for every $i$ and such that $\Vert \sum_{j=1}^n \lambda_j g_k^j\Vert=\sum_{j=1}^n \vert \lambda_j\vert$ holds for every $\lambda_1,\ldots, \lambda_n\in\mathbb R$ and every $k\in\mathbb N$.
\end{theorem}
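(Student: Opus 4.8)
The statement follows from the previous lemma by an exhaustion/diagonal argument: I want to perturb the $f_i$ on smaller and smaller open sets so that the perturbation becomes "weakly small" while retaining the $\ell_1^n$-property at every stage. The key quantitative input is that the lemma lets me prescribe the modification on \emph{any} non-empty open set $A$, and the modified functions agree with the original ones outside $A$.

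**Main steps.** First I would fix a decreasing sequence of non-empty open sets $A_k \subseteq K$ shrinking to a single point $t_0 \in K$ (possible since $K$ has no isolated points, hence is infinite; actually I should be slightly careful — I want $\bigcap_k \overline{A_k}$ to be a single point or at least a set on which I have good control; taking a point $t_0$ with a neighborhood basis is the natural route, though $K$ need not be metrizable, so instead I'd take $A_k$ with $\overline{A_k}\setminus A_{k+1}$ having small "mass" in the sense below). For each $k$, apply the lemma with $A = A_k$ to the \emph{original} functions $f_1,\dots,f_n$ to obtain $g_k^1,\dots,g_k^n \in S_{\mathcal C(K)}$ satisfying $g_k^i = f_i$ on $K\setminus A_k$ and $\|\sum_j \lambda_j g_k^j\| = \sum_j|\lambda_j|$ for all scalars. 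The $\ell_1^n$-property holds for every $k$ by construction, so the only thing left is weak convergence $g_k^i \to f_i$.

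**Weak convergence.** To get $g_k^i \xrightarrow{w} f_i$, I test against an arbitrary $\mu \in \mathcal C(K)^* = M(K)$, a regular Borel measure. Since $g_k^i - f_i$ is supported in $A_k$ and bounded by $2$ in sup norm, $|\mu(g_k^i) - \mu(f_i)| \leq 2\,|\mu|(A_k)$. So I need $|\mu|(A_k) \to 0$ for every finite Borel measure $\mu$. This forces me to choose the $A_k$ more cleverly: a single decreasing sequence of open sets cannot have $|\mu|(A_k)\to 0$ simultaneously for \emph{all} $\mu$ (e.g. $\mu = \delta_{t_0}$ if $t_0 \in \bigcap A_k$). \textbf{This is the main obstacle.} The fix is that weak convergence only requires convergence along the sequence for each \emph{fixed} $\mu$, not uniformity — but even so, no fixed nested sequence works against $\delta_t$ for $t$ in all the $A_k$. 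So the honest resolution is: the $A_k$ need not be nested, and more importantly I should \emph{diagonalize}. Reconsidering: the cleanest approach is to observe that it suffices to produce, for each $k$, a function $g_k^i$ that is close to $f_i$ on a large part of $K$; concretely, enumerate a suitable separating/norming family isn't available in general $M(K)$...

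**Revised plan for the obstacle.** The correct device, used in this circle of ideas, is: since $\mathcal C(K)$ may be nonseparable we cannot metrize the weak topology, so "sequence weakly converging to $f_i$" must genuinely be arranged. I would instead pick $t_0\in K$ and, using that $K$ has no isolated points, choose for each $k$ an open set $A_k$ that is small in the sense that there exist points allowing the lemma's construction but with $A_k$ chosen \emph{inside} a neighborhood of $t_0$ on which we then note $\delta_{t_0}(g_k^i)=g_k^i(t_0)$ need not converge — so this too fails. Therefore the realistic statement being proved must be using that the relevant weak topology is tested only against a \emph{specific} functional $G$ later (cf.\ the application to Daugavet points, where one needs $\|z+x\otimes y\|$ large in \emph{one} slice at a time). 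Re-reading the theorem: it really does claim full weak convergence. The resolution: choose the $A_k$ so that each point of $K$ lies in only finitely many $A_k$ (a "point-finite" shrinking sequence), which is possible by taking $A_{k+1}$ to be a proper open subset of $A_k$ with $\overline{A_{k+1}} \subsetneq A_k$ and $\bigcap_k \overline{A_k} = \emptyset$ — achievable when $K$ has no isolated points by repeatedly removing a point and shrinking. Then for any finite regular Borel measure $\mu$, $|\mu|(A_k)\to |\mu|(\bigcap A_k)=0$ by continuity from above of the finite measure $|\mu|$. Hence $|\mu(g_k^i)-\mu(f_i)|\le 2|\mu|(A_k)\to 0$, giving $g_k^i\xrightarrow{w}f_i$, and the $\ell_1^n$-identity holds for each $k$ by the lemma. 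The one technical point to verify carefully is the existence of such a strictly-decreasing sequence of open sets with empty intersection of closures inside a space with no isolated points; I would do this by transfinite-free elementary induction, at step $k$ choosing two distinct points in $A_k$ (possible: $A_k$ infinite), separating them by disjoint open sets, and letting $A_{k+1}$ be the closure of one of these intersected suitably so that a prescribed point gets excluded — iterating so that $\bigcap_k \overline{A_k}=\emptyset$. That bookkeeping is the only place requiring care; everything else is immediate from the lemma and dominated convergence for measures.
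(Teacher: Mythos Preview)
Your final plan has a genuine gap: you want nonempty open sets $A_k$ with $\overline{A_{k+1}}\subsetneq A_k$ and $\bigcap_k \overline{A_k}=\emptyset$, but in a compact Hausdorff space any nested sequence of nonempty closed sets has nonempty intersection. So such a sequence cannot exist, and the ``continuity from above'' step never gets off the ground. Your own objection against $\delta_{t_0}$ with $t_0\in\bigcap_k A_k$ is in fact unavoidable for nested sets.

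The fix is the one the paper uses and is actually hiding inside your last paragraph: instead of nesting, take the $A_k$ to be \emph{pairwise disjoint} nonempty open sets. Your splitting argument (pick two distinct points in a nonempty open set, separate them by disjoint open neighborhoods, keep one as $A_k$ and recurse inside the other) produces exactly such a sequence; you just have to keep the discarded halves rather than the nested ones. With pairwise disjoint $A_k$, apply the lemma at stage $k$ with $A=A_k$ to get $g_k^1,\dots,g_k^n$. Now each $t\in K$ lies in at most one $A_k$, so $(f_i-g_k^i)(t)=0$ for all but at most one $k$; hence $f_i-g_k^i\to 0$ pointwise and is bounded. The paper finishes by invoking Rainwater's theorem (the extreme points of $B_{C(K)^*}$ are $\pm\delta_t$, so pointwise convergence of a bounded sequence implies weak convergence). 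Your measure estimate also works in this setting, since $\sum_k |\mu|(A_k)\le |\mu|(K)<\infty$ forces $|\mu|(A_k)\to 0$; but either way the essential correction is \emph{disjoint} rather than \emph{nested} open sets.
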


\begin{proof}
	Since $K$ does not have any isolated point we can find a sequence of non-empty open subsets $\{V_n\}\subseteq K$ being pairwise disjoint. Now, apply the previous lemma to $f_1,\ldots, f_n$ and $V_k$ to find $g_k^1,\ldots, g_k^n$ satisfying the thesis of the lemma, and we only have to prove that $g_k^i\rightarrow f_i$ weakly. To this end, notice that $f_i-g_k^i$ is a sequence on $k$ which is pairwise disjoint. Hence $f_i-g_k^i$ is a bounded sequence which converges pointwise to $0$. By Rainwater theorem \cite[Corollary 3.61]{FHHMPZ}, $f_i-g_k^i\rightarrow 0$ weakly, and we are done.\end{proof}

Now we are ready to prove the following result.

\begin{corollary} \label{C(K)result} 
	Let $K$ be a compact Hausdorff topological space without any isolated point. Let $Y$ be a Banach space. Then, given any convex combination of slices $C$, there exists an element $z\in C$ such that $z=\sum_{i=1}^n \lambda_i x_i\otimes y_i\in co(S_{C(K)}\otimes S_Y)$ with
\begin{equation*} 
\left\| \sum_{j=1}^n \mu_j x_j \right\| = \sum_{j=1}^n |\mu_j|,
\end{equation*}
for every $\mu_1,\ldots, \mu_n\in\mathbb R$.
\end{corollary}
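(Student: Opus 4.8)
The plan is to take an arbitrary point of the given convex combination of slices $C$, approximate it by a convex combination of elementary tensors lying in $B_{\mathcal C(K)\pten Y}$, and then perturb \emph{only the $\mathcal C(K)$-coordinates} by invoking Theorem \ref{theo:densiC(K)}, in such a way that the perturbed point stays inside $C$ but the continuous functions appearing in it now span an isometric copy of $\ell_1^n$.

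First I would write $C=\sum_{k=1}^N\alpha_k S_k$ with $\alpha_k>0$, $\sum_{k=1}^N\alpha_k=1$ and $S_k=S(B_{\mathcal C(K)\pten Y},G_k,\beta_k)$. Since $B_{\mathcal C(K)\pten Y}=\cco(S_{\mathcal C(K)}\otimes S_Y)$ and each $S_k$ is a non-empty relatively norm-open subset of the ball, the dense set $\co(S_{\mathcal C(K)}\otimes S_Y)$ meets every $S_k$, so I may pick $w_k\in S_k$ of the form $w_k=\sum_j\gamma_{k,j}\,a_{k,j}\otimes b_{k,j}$ with $a_{k,j}\in S_{\mathcal C(K)}$, $b_{k,j}\in S_Y$, $\gamma_{k,j}\geq 0$ and $\sum_j\gamma_{k,j}=1$. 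Enumerating all the functions $a_{k,j}$ (with repetitions) as $f_1,\dots,f_n\in S_{\mathcal C(K)}$ through a bijection $p\leftrightarrow(k(p),j(p))$, I would apply Theorem \ref{theo:densiC(K)} to $f_1,\dots,f_n$ to get sequences $\{g_\ell^p\}_\ell\subseteq S_{\mathcal C(K)}$ with $g_\ell^p\to f_p$ weakly and $\bigl\|\sum_p\mu_p g_\ell^p\bigr\|=\sum_p|\mu_p|$ for all $\ell$ and all scalars. Then I would set $b_p:=b_{k(p),j(p)}$, $\lambda_p:=\alpha_{k(p)}\gamma_{k(p),j(p)}$, $w_k^\ell:=\sum_{p:\,k(p)=k}\gamma_{k,j(p)}\,g_\ell^p\otimes b_p$ and $z_\ell:=\sum_{k=1}^N\alpha_k w_k^\ell=\sum_{p=1}^n\lambda_p\,g_\ell^p\otimes b_p$, and claim that $z:=z_\ell$ works for $\ell$ large.

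The verifications should be routine. One has $\|w_k^\ell\|\leq\sum_j\gamma_{k,j}=1$, so $w_k^\ell\in B_{\mathcal C(K)\pten Y}$. For fixed $b\in Y$ and $G\in(\mathcal C(K)\pten Y)^*=\mathcal B(\mathcal C(K)\times Y)$, the map $x\mapsto\langle x\otimes b,G\rangle$ is a bounded functional on $\mathcal C(K)$, so weak convergence $g_\ell^p\to f_p$ lifts to weak convergence $g_\ell^p\otimes b\to f_p\otimes b$ in $\mathcal C(K)\pten Y$; hence $w_k^\ell\to w_k$ weakly. Since $G_k$ is weakly continuous and $G_k(w_k)>\|G_k\|-\beta_k$, one gets $G_k(w_k^\ell)>\|G_k\|-\beta_k$ for $\ell$ large, and choosing $\ell$ large enough for all $k=1,\dots,N$ at once yields $w_k^\ell\in S_k$ for every $k$, i.e. $z_\ell\in C$. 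Finally $z_\ell=\sum_{p=1}^n\lambda_p\,g_\ell^p\otimes b_p$ with $\lambda_p\geq 0$, $\sum_p\lambda_p=1$, $g_\ell^p\in S_{\mathcal C(K)}$, $b_p\in S_Y$, so $z_\ell\in\co(S_{\mathcal C(K)}\otimes S_Y)$, and by construction the functions $g_\ell^p$ satisfy the required $\ell_1$-identity.

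I expect the only real subtlety to be the tension between the two demands on the perturbation $f_p\mapsto g_\ell^p$: it has to be \emph{weak} so that membership in the fixed slices $S_k$ is preserved (the $S_k$ are weakly continuous through $G_k$ but are not weakly open as sets), while at the same time being \emph{drastic enough} to convert an arbitrary finite family of continuous functions into an isometric $\ell_1^n$-basis. Theorem \ref{theo:densiC(K)} is precisely the tool that makes these two requirements compatible; the bridge to the tensor product is the elementary remark that weak convergence in $\mathcal C(K)$ lifts to weak convergence of the elementary tensors $g\otimes b$ in $\mathcal C(K)\pten Y$. A minor bookkeeping point — that the $a_{k,j}$ need not be pairwise distinct — causes no trouble, since the construction behind Theorem \ref{theo:densiC(K)} produces $\ell_1$-independent perturbations even of families with repeated functions.
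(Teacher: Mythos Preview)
Your proof is correct and follows essentially the same strategy as the paper: apply Theorem~\ref{theo:densiC(K)} to the $C(K)$-coordinates and use that weak convergence $g_\ell^p\to f_p$ in $C(K)$ lifts to weak convergence $g_\ell^p\otimes b\to f_p\otimes b$ in $C(K)\pten Y$, so the perturbed point stays in the slices. The only difference is that the paper picks a \emph{single} elementary tensor $u_i\otimes y_i$ in each slice $S_i$ (possible since $B_{C(K)\pten Y}=\cco(S_{C(K)}\otimes S_Y)$, so the supremum defining each slice is attained arbitrarily closely on $S_{C(K)}\otimes S_Y$), which spares all your bookkeeping with the inner convex combinations $w_k=\sum_j\gamma_{k,j}\,a_{k,j}\otimes b_{k,j}$.
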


\begin{proof}
	Pick a convex combination of slices $C=\sum_{i=1}^n \lambda_i S_i$ and pick an element $\sum_{i=1}^n \lambda_i u_i\otimes y_i\in C$. Apply Theorem \ref{theo:densiC(K)} to find, for every $i$ a sequence $x_k^i$ which is weakly convergent to $u_i$ and such that the $\{x_k^i: 1\leq i\leq n\}$ is isometrically equivalent to the basis of $\ell_1^n$. Notice that $x_k^i\otimes y_i$ weakly converges to $u_i\otimes y_i$, so find $k$ large enough so that $x_k^i\otimes y_i\in S_i$ holds for every $1\leq i\leq n$ and define $x_i:=x_k^i$ to finish the proof.
\end{proof}

If we combine Theorem B and Corollary \ref{C(K)result} we get the desired consequence.

\begin{corollary}\label{cor:consefinC(K)}
Let $K$ be a compact Hausdorff topological space without any isolated point. Let $Y$ be a Banach space such that $Y$ is isometric to a subspace of $C(K)^*$. Then any convex combination of slices $C$ of $B_{C(K)\pten Y}$ contains a Daugavet point. In particular, the set of Daugavet points is weakly dense in $B_{C(K)\pten Y}$.
\end{corollary}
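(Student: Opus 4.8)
The plan is to read this statement off by feeding the output of Corollary \ref{C(K)result} into Theorem B, taking $X=C(K)$. First I would record the two standard facts about $C(K)$ that are needed: since $K$ is a compact Hausdorff space with no isolated point, $C(K)$ has the Daugavet property, and in any case $C(K)$ is an $L_1$-predual (its dual $C(K)^*$ is an abstract $L_1$-space). Hence hypothesis (1) of Theorem B holds for $X=C(K)$, while hypothesis (3) is precisely the standing assumption that $Y$ is isometric to a subspace of $C(K)^*$.

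Next, fix an arbitrary convex combination of slices $C=\sum_{i=1}^n\lambda_i S_i$ of $B_{C(K)\pten Y}$. By Corollary \ref{C(K)result} there is an element
\[
z=\sum_{i=1}^n\lambda_i x_i\otimes y_i\in C\cap\co(S_{C(K)}\otimes S_Y)
\]
with $\bigl\|\sum_{j=1}^n\mu_j x_j\bigr\|=\sum_{j=1}^n|\mu_j|$ for all $\mu_1,\dots,\mu_n\in\R$; this is exactly hypothesis (2) of Theorem B for this particular $z$. Thus Theorem B applies to $z$ and yields that $z\in S_{C(K)\pten Y}$ is a Daugavet point. Since $z\in C$, we conclude that $C$ contains a Daugavet point, which is the first assertion.

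For the ``in particular'' clause I would invoke Bourgain's lemma: every nonempty relatively weakly open subset of a bounded closed convex set contains a convex combination of slices of that set. Applying this to $B_{C(K)\pten Y}$, any nonempty relatively weakly open subset of $B_{C(K)\pten Y}$ contains some convex combination of slices $C$, which by the previous paragraph contains a Daugavet point; hence the set of Daugavet points meets every such subset and is therefore weakly dense in $B_{C(K)\pten Y}$.

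There is no serious obstacle here, as the genuine content has already been carried out in Theorem B, in Corollary \ref{C(K)result}, and in the lemma and Theorem \ref{theo:densiC(K)} preceding it; the corollary is just the bookkeeping that glues them together. The only small points to keep in mind are that $C(K)$ with $K$ perfect is indeed an $L_1$-predual with the Daugavet property, and the routine passage, via Bourgain's lemma, from ``every convex combination of slices contains a Daugavet point'' to weak density of the Daugavet points.
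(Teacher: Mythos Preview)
Your proposal is correct and follows exactly the route the paper takes: the paper simply declares the first assertion ``clear'' (meaning precisely your combination of Corollary~\ref{C(K)result} with Theorem~B, using that $C(K)$ with $K$ perfect is an $L_1$-predual with the Daugavet property) and then invokes Bourgain's lemma for the weak density statement. You have just written out explicitly what the paper leaves implicit.
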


\begin{proof}
All but the last assertion is clear. Moreover, the last part follows by the well-known Bourgain's lemma \cite[Lemma II.1]{GGMS}, which asserts that every nom-empty relatively weakly open subset of the unit ball contains a convex combination of slices of the unit ball.
\end{proof}

Note that even if a Banach space $X$ satisfies that the set of all Daugavet points is weakly dense then it is not true that every non-empty relatively weakly open subset of $B_X$ has diameter two. Indeed, in \cite[Section 4]{ALMT} it is exhibited an example of Banach space $Z$ having a weakly dense set of Daugavet points but such that the unit ball has points of weak-to-norm continuity and, in particular, the unit ball of $Z$ contains arbitrarily small diameter non-empty relatively weakly open subsets. Notice, however, that the assumptions of Theorem B does not permit this phenomenon. Indeed, under the assumptions of Theorem B, $X^*$ is an $L_1$-space, then the condition (3) in Theorem B implies that $\mathcal{L}(Y, X^*)$ has an octahedral norm (see \cite[Theorem 3.2]{LR}) and then $X \pten Y$ satisfies that even every convex combination of slices of the unit ball has diameter $2$ (see \cite[Theorem 2.1]{BLR}).

We finish this section by presenting a short but detailed study of elementary tensors $x \otimes y$ in $X \pten Y$ which are Daugavet points. This is motivated by the recent paper \cite{LP}, where the authors prove that if $x \in S_X$ is a $\Delta$-point, then $x \otimes y \in S_{X \pten Y}$ is a $\Delta$-point for every $y \in S_Y$ (see \cite[Remark 5.4]{LP}). We recall that $x \in S_X$ is a {\it $\Delta$-point} if given $\eps > 0$ and a slice $S$ of $B_X$ containing $x$, then there exists $y \in S$ satisfying $\|x - y\| \geq 2 - \eps$. We prove a converse of this result when we assume that $y$ is a strongly exposed point. We give the analogous for Daugavet points. All of these results are summed up in the Proposition \ref{elementarytensors} below.

Before presenting its proof, let us just make some brief comments. Notice that $x \in S_X$ is a Daugavet point if and only $-x \in S_X$ is a Daugavet point. Thus, we can substitute the condition $\|x - y\| \geq 2 - \eps$ for $\|x + y\| \geq 2 - \eps$ whenever it is convenient. We will be using this fact in the proof of Proposition \ref{elementarytensors}.(b) without any explicit reference (this does not hold for $\Delta$-points). Moreover, in the proof of Proposition \ref{elementarytensors}.(c), we need to guarantee that if $x_0 \otimes y_0 \in S_{X \pten Y}$ is a Daugavet point then, for every slice $S$ of $B_{X\pten Y}$ and every $\varepsilon>0$, there exists an {\it elementary} tensor $u_0 \otimes v_0\in S$ such that $\|x_0 \otimes y_0 - u_0 \otimes v_0\| \geq 2 - \eps$. For this, we highlight the following lemma observed in \cite[Remark 2.3]{JR}, which guarantees such a fact.

\begin{lemma} \label{lemmaslices} Let $X$ be a Banach space. Let $x \in S_X$ be a Daugavet point. Then, for each $\eps > 0$ and each slice $S(B_X, x^*, \alpha)$, there exists a new slice $S(B_X, x_0^*, \alpha_0)$ such that $S(B_X, x_0^*, \alpha_0) \subseteq S(B_X, x^*, \alpha)$ and $\|x + y\| \geq 2 - \eps$	for every $y \in S(B_X, x_0^*, \alpha_0)$.
\end{lemma}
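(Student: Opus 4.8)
The plan is to extract a small "sub-slice" on which the defining property of a Daugavet point holds uniformly, rather than just for one far point. First I would recall what a Daugavet point gives: for the fixed slice $S := S(B_X, x^*, \alpha)$ and a parameter $\varepsilon/2$, there exists $y_1 \in S$ with $\|x + y_1\| \geq 2 - \varepsilon/2$. Pick a functional $z^* \in S_{X^*}$ with $z^*(x + y_1) \geq 2 - \varepsilon/2$; then in particular $z^*(y_1) \geq 1 - \varepsilon/2$, so the slice $S(B_X, z^*, \varepsilon/2)$ is non-empty (it contains $y_1$). The idea is that this new slice, or a slight shrinking of it, should already be contained in $S$ and should force $\|x + y\| \geq 2 - \varepsilon$ for all of its members.

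For the norm estimate: if $y \in S(B_X, z^*, \delta)$ for a sufficiently small $\delta$, then $z^*(x + y) = z^*(x) + z^*(y) \geq (1 - \varepsilon/2) + (1 - \delta)$, where $z^*(x) \geq z^*(x+y_1) - z^*(y_1) \geq (2 - \varepsilon/2) - 1 = 1 - \varepsilon/2$; taking $\delta = \varepsilon/2$ yields $\|x + y\| \geq z^*(x+y) \geq 2 - \varepsilon$, as desired. The genuinely delicate point — and the step I expect to be the main obstacle — is arranging the inclusion $S(B_X, x_0^*, \alpha_0) \subseteq S(B_X, x^*, \alpha)$ simultaneously. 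The functional $z^*$ need not be a small perturbation of $x^*$, so $S(B_X, z^*, \delta)$ is generally not inside $S$. The standard fix is to take a convex combination of the two functionals: set $x_0^* := \theta x^* + (1-\theta) z^*$ for a suitable $\theta \in (0,1)$ close to $1$, and choose $\alpha_0$ small. Since $y_1$ lies in the interior of $S$ with respect to $x^*$ (because $x^*(y_1) > \|x^*\| - \alpha$, strictly) and also satisfies $z^*(y_1) \geq 1 - \varepsilon/2$, one checks that for $\theta$ close enough to $1$ and $\alpha_0$ small enough, every $y \in S(B_X, x_0^*, \alpha_0)$ has $x^*(y)$ close to $x^*(y_1)$ — hence still $> \|x^*\| - \alpha$ — and $z^*(y)$ close to $z^*(y_1) \geq 1 - \varepsilon/2$, hence $> 1 - \varepsilon/2 - (\text{small})$. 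Concretely, one fixes $\eta > 0$ with $x^*(y_1) > \|x^*\| - \alpha + \eta$ and $z^*(y_1) > 1 - \varepsilon/2 + \eta$ (shrinking $\varepsilon$ at the outset if needed so that both slacks are positive), then picks $\theta, \alpha_0$ so that membership in $S(B_X, x_0^*, \alpha_0)$ propagates $\eta/2$-closeness in each coordinate. This is the routine but slightly fiddly part.

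An alternative, cleaner route avoids the convex-combination bookkeeping entirely: apply Lemma 3 of \cite{S} (the same tool used in the proof of Theorem \ref{DPtensors1}), which — for a space with the Daugavet property — produces, inside any given slice, a sub-slice on which a prescribed point is "almost diametrically opposite." Since the excerpt only assumes $x$ is a Daugavet \emph{point} (not that $X$ has the Daugavet property), I would instead invoke the observation from \cite[Remark 2.3]{JR} directly, or reprove it by the convex-combination argument above. Either way, once the inclusion and the uniform norm bound are in hand, we set $x_0^* = x_0^*/\|x_0^*\|$ (normalizing if one wants a norm-one functional) and the lemma follows. I would therefore structure the write-up as: (i) invoke the Daugavet-point property to get $y_1$ and $z^*$; (ii) define $x_0^*$ as the convex combination and choose $\theta, \alpha_0$ to secure the inclusion into the original slice; (iii) verify the $2 - \varepsilon$ lower bound on $\|x + y\|$ for every $y$ in the new slice via the functional $z^*$.
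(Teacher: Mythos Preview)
The paper does not give its own proof of this lemma; it simply records the statement and attributes it to \cite[Remark 2.3]{JR}. So there is no in-paper argument to compare against, and in fact you yourself identify the citation route as one of your two options.

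Your direct argument is essentially the standard one and is correct in outline: from the Daugavet-point property obtain $y_1\in S$ with $\Vert x+y_1\Vert>2-\eps/2$, pick $z^*\in S_{X^*}$ with $z^*(x+y_1)>2-\eps/2$, and then pass to a sub-slice of $S$ determined by a convex combination $x_0^*=\theta x^*+(1-\theta)z^*$. One technical point deserves a bit more care than you give it. The two requirements --- that membership in the new slice forces $x^*(y)>\Vert x^*\Vert-\alpha$ and also forces $z^*(y)$ to stay close to $1$ --- pull $\theta$ in opposite directions, and if $y_1$ happens to lie near the ``boundary'' of $S$ (i.e.\ $x^*(y_1)$ is only barely above $\Vert x^*\Vert-\alpha$) the admissible interval for $\theta$ can collapse. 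The clean fix is to first apply the Daugavet-point property to the smaller slice $S(B_X,x^*,\alpha/2)$ rather than to $S$ itself; this guarantees a uniform slack $x^*(y_1)>\Vert x^*\Vert-\alpha/2$, and then a short computation shows that one can indeed choose $\theta\in(0,1)$ and $\alpha_0>0$ so that both conditions hold simultaneously. With that adjustment your steps (i)--(iii) go through exactly as you describe.
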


Now we are ready to prove the promised result.

\begin{proposition} \label{elementarytensors} Let $X$ and $Y$ be Banach spaces. Let $x_0 \in S_X$ and $y_0 \in S_Y$. 
\begin{itemize}
\setlength\itemsep{0.2cm} 
	\item[(a)] If $y_0 \in S_Y$ is strongly exposed and $x_0 \otimes y_0$ is a $\Delta$-point, then $x_0$ is a $\Delta$-point. 
	
	\item[(b)] If $x_0$ and $y_0$ are both Daugavet points, then so is $x_0 \otimes y_0$. 
	
	\item[(c)] If $y_0$ is denting and $x_0 \otimes y_0$ is Daugavet, then $x_0$ is a Daugavet point.
\end{itemize}
	
\end{proposition}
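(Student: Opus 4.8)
The plan is to prove the three parts separately, since each one has a slightly different flavour.

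\medskip

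\textbf{Part (a).} Suppose $y_0$ is strongly exposed, say by $y_0^*\in S_{Y^*}$, and $x_0\otimes y_0$ is a $\Delta$-point. To show $x_0$ is a $\Delta$-point, fix $\eps>0$ and a slice $S=S(B_X,x^*,\alpha)$ of $B_X$ containing $x_0$. I would lift this to a slice of $B_{X\pten Y}$ using the functional $G:=x^*\otimes y_0^*\in(X\pten Y)^*$; since $G(x_0\otimes y_0)=x^*(x_0)>1-\alpha$, the slice $S(B_{X\pten Y},G,\alpha)$ contains $x_0\otimes y_0$. Because $x_0\otimes y_0$ is a $\Delta$-point, there is $w\in S(B_{X\pten Y},G,\alpha)$ with $\|x_0\otimes y_0-w\|\geq 2-\eps'$ for a suitably small $\eps'$. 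Approximating $w$ by a finite sum $\sum x_i\otimes y_i$ of elementary tensors in the ball and using that $G(w)>1-\alpha$ together with $\|y_0^*\|=1$, one forces one of the $y_i$'s (after sign normalization) to be close to $y_0$ (exactly the ``strongly exposed'' trick already used in the proof of Theorem \ref{DPtensors1}), and simultaneously forces $x^*$ to be close to attaining its norm at the corresponding $x_i$. The point $x_i$ then lies in (a slight enlargement of) $S$, and $\|x_0-x_i\|$ inherits the lower bound from $\|x_0\otimes y_0-w\|$. Cleaning up the $\eps$'s gives a point in $S$ at distance $\geq 2-\eps$ from $x_0$.

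\medskip

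\textbf{Part (b).} This is the easiest: assume $x_0$ and $y_0$ are Daugavet points and fix a slice $S=S(B_{X\pten Y},G,\alpha)$ and $\eps>0$, where $G\in\mathcal L(X,Y^*)$ with $\|G\|=1$. Pick an elementary tensor $u\otimes v\in S$ with $u\in S_X$, $v\in S_Y$ and $G(u,v)>1-\alpha$. The slice $\{x\in B_X:G(x,v)>1-\alpha\}$ of $B_X$ is nonempty (it contains $u$), so by Lemma \ref{lemmaslices} applied to the Daugavet point $x_0$ there is a subslice on which $\|x_0+\cdot\|\geq 2-\eps$; choose $x_1$ there, and then $x_1\otimes v\in S$. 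Now use that $y_0$ is a Daugavet point in the slice $\{y\in B_Y:$ the appropriate functional $>1-\alpha\}$ to produce $y_1$ with $\|y_0+y_1\|\geq 2-\eps$ and $x_1\otimes y_1\in S$. Finally estimate $\|x_0\otimes y_0+x_1\otimes y_1\|$ from below by testing against a norming functional of the form $x^*\otimes y^*$ realizing both $\|x_0+x_1\|$ and $\|y_0+y_1\|$ — a standard bilinear computation gives a value $\geq 2-\eps'$. (Here one uses the remark preceding the proposition that $x\mapsto\pm x$ preserves the Daugavet-point property, so the ``$+$'' version of the defining inequality is available.)

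\medskip

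\textbf{Part (c).} Assume $y_0$ is denting and $x_0\otimes y_0$ is a Daugavet point; we want $x_0$ to be Daugavet. Fix $\eps>0$ and a slice $S=S(B_X,x^*,\alpha)$. Since $y_0$ is denting, there is a slice $S(B_Y,y^*,\beta)$ of $B_Y$ of diameter $<\eps$ containing $y_0$; then $G:=x^*\otimes y^*$ defines a slice of $B_{X\pten Y}$ containing $x_0\otimes y_0$. By Lemma \ref{lemmaslices} (applied to the Daugavet point $x_0\otimes y_0$) there is a subslice on which $\|x_0\otimes y_0+\cdot\|\geq 2-\eps$, and on which we may, by the cited remark, pick an \emph{elementary} tensor $u_0\otimes v_0$; normalizing we take $u_0\in S_X$, $v_0\in S_Y$. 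From $G(u_0,v_0)>1-\alpha$ and the smallness of the slice of $B_Y$ we get $v_0$ is within $\eps$ of $y_0$ (up to sign) and $x^*(u_0)>1-\alpha$, so $u_0\in S$. Then
\[
2-\eps\leq\|x_0\otimes y_0+u_0\otimes v_0\|\leq\|x_0\otimes y_0-x_0\otimes v_0\|+\|x_0\otimes v_0+u_0\otimes v_0\|\leq\eps+\|x_0+u_0\|,
\]
giving $\|x_0+u_0\|\geq 2-2\eps$ with $u_0\in S$, as required.

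\medskip

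\textbf{Main obstacle.} The only genuinely delicate point is the passage from an arbitrary element $w$ of a slice of $B_{X\pten Y}$ to a single \emph{elementary} tensor that both stays in the slice and carries the diameter lower bound; in parts (a) and (c) this is what makes the strong-exposedness / denting hypothesis indispensable, and it is handled by Lemma \ref{lemmaslices} together with the finite-sum approximation argument. The bilinear norming computations in part (b) are routine by comparison.
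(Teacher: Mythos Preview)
Your outlines for parts (b) and (c) are essentially the paper's own arguments (up to harmless sign conventions), so those are fine. The problem is part~(a), where there is a genuine gap.

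First, a tool mismatch. In your ``Main obstacle'' paragraph you say the reduction to elementary tensors in parts~(a) and~(c) is handled by Lemma~\ref{lemmaslices}. But Lemma~\ref{lemmaslices} is stated for \emph{Daugavet} points: it produces a whole subslice on which the distance estimate holds, and nothing in that statement requires the point to belong to the slice. For a $\Delta$-point the slice must contain $x_0\otimes y_0$, and you are not free to pass to an arbitrary subslice of $B_{X\pten Y}$ without losing that membership. The paper therefore invokes a different fact for~(a), namely \cite[Lemma~2.1]{JR}, which is the $\Delta$-point analogue allowing one to find an elementary tensor $u_0\otimes v_0$ in the given slice with $\|x_0\otimes y_0-u_0\otimes v_0\|\geq 2-\eps$.

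Second, and more seriously, your lifting in~(a) uses the slice $S(B_{X\pten Y},x^*\otimes y_0^*,\alpha)$ with the \emph{original} width $\alpha$. From $x^*(u_0)y_0^*(v_0)>1-\alpha$ you only get $y_0^*(v_0)>1-\alpha$, which forces $v_0$ close to $y_0$ only when $\alpha$ is smaller than the $\delta$ coming from strong exposedness of $y_0$. But $\alpha$ is handed to you; you cannot assume $\alpha<\delta$. Your phrase ``a slight enlargement of $S$'' is exactly where the argument breaks: for a $\Delta$-point you must land back in $S$ itself. The paper fixes this with \cite[Lemma~1.4]{IK}: given the slice $S(B_X,x^*,\alpha)\ni x_0$, one first finds a subslice $S(B_X,x_1^*,\eta)\ni x_0$ contained in it with $\eta<\delta$, and only then lifts via $x_1^*\otimes y_0^*$ to a width-$\eta$ slice of $B_{X\pten Y}$. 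After that, an elementary tensor $u_0\otimes v_0$ produced by \cite[Lemma~2.1]{JR} satisfies $u_0\in S(B_X,x_1^*,\eta)\subseteq S$ and $v_0\in S(B_Y,y_0^*,\delta)$, and the inequality $\|x_0\otimes y_0-u_0\otimes v_0\|\leq\|x_0-u_0\|+\|y_0-v_0\|$ finishes the job. Your finite-sum approximation idea can be made to work in place of \cite[Lemma~2.1]{JR}, but only after this shrinking step; as written, the constants do not close up.
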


\begin{proof} (a). In order to prove that $x_0$ is a $\Delta$-point, let us consider $S(B_X, x_0^*, \alpha)$ to be a slice of $B_X$ containing $x_0$ with $\|x_0^*\| = 1$ and $\alpha > 0$. Let $\eps > 0$ be given. Assuming that $y_0 \in S_Y$ is strongly exposed, consider the functional $y_0^* \in S_{Y^*}$ which strongly exposes $y_0$. Then, there exists $\delta = \delta(\eps) > 0$ such that $\diam(S(B_Y, y_0^*, \delta)) < \eps$. Consider $0 < \eta < \delta$. By \cite[Lemma 1.4]{IK}, there exists $x_1^* \in S_{X^*}$ such that $x_0 \in S(B_X, x_1^*, \eta) \subseteq S(B_X, x_0^*, \alpha)$. Now, consider the slice $S_1:=S(B_{X \pten Y}, x_1^* \otimes y_0^*, \eta)$ of $B_{X \pten Y}$. Since $y_0^*$ attains its norm at $y_0$, we have that
	\begin{equation*}
	\langle x_0 \otimes y_0, x_1^* \otimes y_0^* \rangle = x_1^*(x_0) > 1 - \eta,
	\end{equation*}
	that is, $x_0 \otimes y_0 \in S_1$. Since $x_0 \otimes y_0$ is a $\Delta$-point, by using \cite[Lemma 2.1]{JR}, we can guarantee the existence of an elementary tensor $u_0 \otimes v_0 \in S_1$ so that $\|x_0 \otimes y_0 - u_0 \otimes v_0\| \geq 2 - \eps$. In particular,
	\begin{equation} \label{ineq2}
	\|x_0 - u_0\| + \|y_0 - v_0\| \geq 2 - \eps. 
	\end{equation} 
	Since $u_0 \otimes v_0 \in S_1$, we have that $x_1^*(u_0) > 1 - \eta$ and $y_0^*(v_0) > 1 - \eta > 1 - \delta$. Therefore, $u_0 \in S(B_X, x_1^*, \eta) \subseteq S(B_X, x_0^*, \alpha)$ and $v_0 \in S(B_Y, y_0^*, \delta)$. So, $\|y_0 - v_0\| < \eps$ and by (\ref{ineq2}), $\|x_0 - u_0\| \geq 2 - 2 \eps$. This proves that $x_0 \in S_X$ is a $\Delta$-point.	
	
\noindent
(b). Suppose that $x_0 \in S_X$ and $y_0 \in S_Y$ are both Daugavet points. Let $\eps > 0$ be given. Let us fix an arbitrary slice $S := S(B_{X \pten Y}, G, \alpha)$ of $B_{X \pten Y}$ with $\|G\| = 1$ and $\alpha > 0$. Take $(x_1, y_1) \in B_X \times B_Y$ to be such that 
\begin{equation*}
G(x_1, y_1) > 1 - \frac{\alpha}{2}
\end{equation*}
and let us consider the slice of $B_X$
\begin{equation*}
S_1 := \left\{ x \in B_X: G(x, y_1) > \sup_{u \in B_X} G(u, y_1) - \frac{\alpha}{4} \right\}.
\end{equation*}
Since $x_0 \in S_X$ is a Daugavet point, we can find $u_0 \in S_1$ such that $\|x_0 - u_0\| \geq 2 - \eps$. On the other hand, let us consider the slice of $B_Y$
\begin{equation*}
S_2 := \left\{ y \in B_Y: G(u_0, y) > \sup_{v \in B_Y} G(u_0, v) - \frac{\alpha}{4} \right\}.
\end{equation*}
Since $y_0 \in S_Y$ is a Daugavet point, we can find $v_0 \in S_2$ such that $\|y_0 + v_0\| \geq 2 - \eps$. Now, since $v_0 \in S_2$ and $u_0 \in S_1$, we have that
\begin{eqnarray*}
	G(u_0, v_0) &>& \sup_{v \in B_Y} G(u_0, v) - \frac{\alpha}{4} \\
	&>& \sup_{u \in B_X} G(u, y_1) - \frac{\alpha}{4} - \frac{\alpha}{4} \\
	&>& G(x_1, y_1) - \frac{\alpha}{2} \\
	&>& 1 - \alpha,	
\end{eqnarray*}
which shows that $u_0 \otimes v_0 \in S$. We will prove that $\|x_0 \otimes y_0 - u_0 \otimes v_0\| \geq 2(1 - \eps)^2$. Indeed, let $x^* \in S_{X^*}$ and $y^* \in S_{Y^*}$ be such that 
\begin{equation*}
x^*(x_0 - u_0) \geq 2 - \eps \ \ \ \mbox{and} \ \ \ y^*(y_0 + v_0) \geq 2 - \eps.
\end{equation*}
In particular, we have that $x^*(x_0), y^*(y_0)$, and $y^*(v_0)$ are all $\geq 1 - \eps$ and, on the other hand, that $x^*(u_0) \leq 1 - \eps$. Finally, define $G_0 \in \mathcal{B}(X \times Y)$ by $G_0(x, y) := x^*(x) y^*(y)$ for every $x \in X$ and $y \in Y$. Then, $\|G_0\| = 1$ and
\begin{eqnarray*}
	\|x_0 \otimes y_0 - u_0 \otimes v_0\| &\geq& G_0(x_0 \otimes y_0 - u_0 \otimes v_0) \\
	&=& x^*(x_0) y^*(y_0) - x^*(u_0) y^*(v_0) \\
	&\geq& (1 - \eps)^2 + (1 - \eps)^2 \\
	&=& 2 (1 - \eps)^2
\end{eqnarray*}
as we wanted. Therefore, $x_0 \otimes y_0 \in S_{X \pten Y}$ is a Daugavet point.
	
\noindent
(c). Let $x_0 \in S_X$ and $y_0 \in S_Y$ be such that $x_0 \otimes y_0 \in S_{X \pten Y}$ is a Daugavet point and let us assume that $y_0 \in S_Y$ is a denting point. To prove that $x_0$ is also a Daugavet point, take $S := S(B_X, x^*, \alpha)$ to be a slice of $B_X$ with $\|x^*\| = 1$ and $\alpha > 0$. Since $y_0$ is a denting point, we can take a slice $S_1 := S(B_Y, y^*, \beta)$ of $B_Y$ with $\|y^*\| = 1$, $\beta > 0$, and such that $y_0 \in S_1$ and $\diam(S_1) < \eps$. Let us define the bilinear form $G(x, y) := x^*(x) y^*(y)$ for every $x \in X$ and $y \in Y$. Now, consider the slice $S_2 := S(B_{X \pten Y}, G, \eta)$ of $B_{X \pten Y}$, where $\eta < \min\{\alpha, \beta\}$. Since $x_0 \otimes y_0 \in S_{X \pten Y}$ is a Daugavet point, by using Lemma \ref{lemmaslices}, we may find $u_0 \otimes v_0 \in S_2$ so that
\begin{equation} \label{ineq1} 
\|x_0 \otimes y_0 - u_0 \otimes v_0\| \geq 2 - \eps. 
\end{equation}
Since $\|y_0\| \leq 1$ and $\|v_0\| \leq 1$, (\ref{ineq1}) implies that 
\begin{equation*}
\|x_0 - u_0\| + \|y_0 - v_0\| \geq 2 - \eps. 
\end{equation*}
On the other hand, since $u_0 \otimes v_0 \in S_2$, we have that
\begin{equation*}
x^*(u_0) y^*(v_0) = G(u_0, v_0) > 1 - \eta = 1 - \min\{\alpha, \beta\}.
\end{equation*}
This implies, in particular, that $x^*(u_0) > 1 - \alpha$ and $y^*(v_0) > 1 - \beta$, that is, we have that $u_0 \in S$ and $v_0 \in S_1$. Since $\diam (S_1) < \eps$, we have that $\|y_0 - v_0\| < \eps$ and therefore
\begin{equation*}
2 - \eps \leq \|x_0 - u_0\| + \|y_0 - v_0\| < \|x_0 - u_0\| + \eps,
\end{equation*}
which implies that $\|x_0 - u_0\| > 2 - 2 \eps$. This shows that $x_0 \in S_X$ is a Daugavet point as desired.	
\end{proof}

\section{An example of a Banach space which satisfies the WODP} \label{sectionanexample}

Let us recall that a Banach space $X$ is said to be $L$-{\it embedded} if $X^{**} = X \oplus_1 Z$ for some subspace $Z$ of $X^{**}$. Examples of $L$-embedded spaces are $L_1(\mu)$-spaces, preduals of von Neumann algebras, preduals of real or complex JBW$^*$-triples or the disk algebra. Let us recall also that a Banach space $X$ is said to satisfy the {\it bounded approximation property} if there exists a positive constant $\lambda$ such that, for every compact subspace $K$ of $X$ and every $\eps > 0$, there exists $S \in \mathcal{F}(X)$ such that $\|S\| \leq \lambda$ and $\|x - S(x)\| \leq \eps$ for every $x \in K$. If this holds for $\lambda = 1$, then $X$ is said to have the {\it metric approximation property} (MAP, for short).

\begin{theorem} \label{WODP} Let $X$ be Banach space which is $L$-embedded and such that it satisfies both metric approximation and the Daugavet properties. Suppose further that $\dens(X) \leq \omega_1$. Then, $X$ has the WODP.
\end{theorem}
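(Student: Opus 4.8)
The plan is to verify the WODP directly from its definition. Fix $\eps > 0$, norm-one vectors $x_1, \dots, x_n \in S_X$, a slice $S = S(B_X, x^*, \alpha)$ of $B_X$, and a target $x' \in B_X$; we must produce $x \in S$ and $T \in \mathcal{L}(X)$ with $\|T\| \le 1+\eps$, $\|T(x_i) - x_i\| < \eps$ for all $i$, and $\|T(x) - x'\| < \eps$. The first step is to pick $x \in S$: since $X$ has the Daugavet property and hence the strong diameter two property, but more to the point we will want the chosen $x$ to behave well with respect to the $L$-decomposition, so I would first fix $x \in S$ arbitrarily, then work inside $X^{**} = X \oplus_1 Z$. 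The key structural input is that in an $L$-embedded space with the Daugavet property one can, for any finite set of vectors and any $\delta > 0$, find an element of a prescribed slice that is ``almost disjointly supported'' from the given vectors in the $\oplus_1$ sense — this is the mechanism by which $L$-embedded Daugavet spaces resemble $L_1(\mu)$. Concretely, I would use that $B_X$ is weak$^*$-dense in $B_{X^{**}}$ (Goldstine) together with the $L$-projection $P\colon X^{**} \to X$ to produce, inside the slice $S$, a vector $x$ and an auxiliary net/element whose image under a suitable functional is close to $2$ when paired against $x_1, \dots, x_n$, exploiting that $\|u + v\| = \|u\| + \|v\|$ when $u \in X$, $v \in Z$.

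The second and main step is to build the operator $T$. Here the metric approximation property and the density assumption $\dens(X) \le \omega_1$ enter. The idea is to define $T$ as a ``small perturbation of the identity on $\spann\{x_1, \dots, x_n\}$ that moves $x$ to (something close to) $x'$'' and then extend it to a norm-$(1+\eps)$ operator on all of $X$. On the finite-dimensional subspace $E = \spann\{x_1, \dots, x_n, x\}$, one defines $T_0 \colon E \to X$ by $T_0(x_i) = x_i$ and $T_0(x) = x'$ (adjusting for linear dependence, and after possibly shrinking so that $x$ is ``almost disjoint'' from $\spann\{x_i\}$, which makes $T_0$ well-defined with norm close to $1$). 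The serious difficulty is the extension: extending a norm-$\approx 1$ finite-rank-valued operator off a subspace of $X$ with only a small norm increase is exactly what fails for general Banach spaces, and here is where one must use that $X$ is $L$-embedded. The strategy I anticipate is to first extend $T_0$ to a map $X^{**} \to X^{**}$ using that $X^{**} = X \oplus_1 Z$ admits natural norm-one projections, then compose with the $L$-projection $P$ back onto $X$, and finally use the MAP to replace the resulting operator by a finite-rank operator on $X$ with controlled norm, checking that the finite-rank approximation can be taken to agree (up to $\eps$) with the identity on $x_1, \dots, x_n$ and with $x'$ on $x$. The role of $\dens(X) \le \omega_1$ is presumably to run a transfinite construction — exhausting $X$ by an increasing $\omega_1$-chain of separable (or MAP-witnessing) subspaces — so that the finite-rank approximations can be patched together coherently; I would set this up as a transfinite induction of length $\omega_1$, at each step extending the operator over one more piece of a long chain of subspaces whose union is dense in $X$, using MAP at successor steps and taking pointwise limits at limit steps of countable cofinality (which is where $\omega_1$, rather than a larger cardinal, is essential, since then all limits below $\omega_1$ have countable cofinality and sequential arguments apply).

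I expect the main obstacle to be precisely this extension-with-small-norm-increase step, and the technical heart of the proof will be showing that the $L$-embeddedness lets one ``make room'' for the new direction $x \mapsto x'$ without disturbing the already-fixed vectors $x_1, \dots, x_n$ — i.e., that after the almost-disjointification, the extension $\widetilde T$ satisfies $\|\widetilde T\| \le 1 + \eps$ rather than merely $\le 2$. Once $T$ is in hand, the verification of (a), (b), (c) in Definition \ref{def:WODP} is routine: (b) and (c) hold by construction, and (a) is the norm bound just discussed. Finally one notes that, having found such $x$ and $T$, the slice $S$ was honored since $x$ was chosen in $S$ from the outset. I would remark that the conclusion then slots into the list following Definition \ref{def:WODP} as item (d), and in particular via Proposition \ref{DPtensors2} gives new instances where $X \pten Y$ has the Daugavet property.
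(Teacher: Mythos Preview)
Your proposal misidentifies the two crucial mechanisms in the proof, and as written the argument has genuine gaps.

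\textbf{Where the density bound is actually used.} You speculate that $\dens(X)\le\omega_1$ is needed for a transfinite $\omega_1$-construction of the operator $T$, patching finite-rank approximations along a long chain of subspaces. This is not what happens. The density hypothesis is used \emph{once}, and for a completely different purpose: combined with the Daugavet property and $L$-embeddedness it guarantees (via \cite[Theorem 3.3]{rueda} in the separable case and \cite[Theorem 4.1]{GR} for $\dens(X)=\omega_1$) the existence of an \emph{$L$-orthogonal element} $u\in S_Z\cap S(B_{X^{**}},x^*,\eta)$, that is, an element of the complementary summand $Z\subset X^{**}$ lying inside the bidual slice and satisfying $\Vert x+u\Vert=\Vert x\Vert+1$ for every $x\in X$. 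This single element $u$ is what ``makes room'' for the direction $x'\!$, not an inductive process.

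\textbf{Where the operator comes from.} Because $u$ is genuinely $\ell_1$-orthogonal to all of $X$ (not merely ``almost disjoint'' from the finite set $x_1,\dots,x_n$), one can define a finite-rank operator $\widehat T$ on $X^{**}$ by $\widehat T(x+z)=T(x)+\varphi(z)x'$, where $T$ is a MAP approximation to the identity on $x_1,\dots,x_n$ and $\varphi\in S_{X^{***}}$ satisfies $\varphi(u)=1$; the $\oplus_1$ decomposition gives $\Vert\widehat T\Vert\le 1$ for free, with no extension problem at all. The passage from $\widehat T\in\mathcal F(X^{**})$ back down to $R\in\mathcal F(X)$ is \emph{not} done by composing with the $L$-projection (that would destroy the value at $u$, since $Pu=0$) but by two applications of the Oja--P\~oldvere refinement of the principle of local reflexivity \cite[Lemma 2.5]{OP}, which produces $R$ with $\Vert R\Vert\le 1+\eps$ and $R^{**}$ agreeing with $\widehat T$ on $\{x_1,\dots,x_n,u\}$. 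Finally, the point $x_0\in S$ is found \emph{last}, not first: one weak$^*$-approximates $u$ by a net $(u_\alpha)\subset S(B_X,x^*,\eta)$, notes $R(u_\alpha)\to x'$ weakly, and picks $x_0$ in the convex hull.

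Your scheme of fixing $x\in S$ inside $X$ at the outset and then defining $T_0(x_i)=x_i$, $T_0(x)=x'$ on $\spann\{x_1,\dots,x_n,x\}$ runs into exactly the obstruction you flag: there is no reason such a $T_0$ has norm close to $1$, and no general extension principle will rescue it. The paper circumvents this by moving to $X^{**}$ where the $L$-orthogonal element $u$ makes the analogous map automatically contractive.
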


\begin{proof} Let $\eps > 0$ be given. Let us fix $x_1, \ldots, x_N \in S_X$, a slice $S := S(B_X, x^*, \eta)$ of $B_X$, and $x' \in B_X$. In order to prove that $X$ has the WODP, we will find a point $x_0 \in S$ and an operator $R \in \mathcal{L}(X)$ satisfying
	\begin{itemize}
		\setlength\itemsep{0.2cm}
		\item[(a)] $\|R\| \leq 1 + \eps$,
		\item[(b)] $\|R(x_i) - x_i\| < \eps$ for every $i = 1, \ldots, N$, and
		\item[(c)] $\|R(x_0) - x'\| < \eps$. 
	\end{itemize}
	Since $X$ is $L$-embedded, we may write $X^{**} = X \oplus_1 Z$ for some subspace $Z$ of $X$. Since $S(B_{X^{**}}, x^*, \eta)$ is a $w^*$-open subset of $B_{X^{**}}$, there exists $u \in S_{Z} \cap S(B_X, x^*, \eta)$, and henceforth
	\begin{equation*}
	\|x + u\| = 1 + \|x\|
	\end{equation*}
	holds for every $x \in X$ (the case $\dens(X)=\omega_0$ was proved in \cite[Theorem 3.3]{rueda} whereas the case $\dens(X)=\omega_1$ was proved in \cite[Theorem 4.1]{GR}).

	Using the hypothesis that $X$ has the MAP, we obtain $T \in \mathcal{F}(X)$ such that $\|T\| \leq 1$ and $\|T(x_i) - x_i\| < \eps$ for every $i = 1, \ldots, N$. Define $\widehat{T} \in \mathcal{L}( X^{**})$ by
	\begin{equation*}
	\widehat{T} (x + z) := T(x) + \varphi(z) x' \ \ \ (x \in X, z \in Z)
	\end{equation*}
	where $\varphi \in S_{X^{***}}$ is such that $\varphi(u) = 1$. Since $\|T\| \leq 1$ and $x' \in B_X$, we have that $\|\widehat{T}\| \leq 1$. Moreover, $\widehat{T}$ is a finite rank operator and $\widehat{T}(u) = x'$.

	We will apply twice \cite[Lemma 2.5]{OP} to find the desired operator $R$ which satisfies (a), (b), (c) above. Indeed, set $F := \Span \{ x_1, \ldots, x_N, u \}$, which is a finite-dimensional subspace of $X^{**}$. By \cite[Lemma 2.5]{OP}, there exists an operator $S \in \mathcal{F}(X^*)$ such that
	\begin{itemize}
	\setlength\itemsep{0.2cm} 
		\item[(i)] $| \|S\| - \| \widehat{T} \| | < \frac{\eps}{2}$,
		\item[(ii)] $\ran S^* = \ran \widehat{T}$, 
		\item[(iii)] $S^*(v) = \widehat{T}(v)$ for every $v \in F = \Span \{ x_1, \ldots, x_N, u \}$, and 
		\item[(iv)] $S^{**} x^{***} = (\widehat{T})^* x^{***}$ for every $x^{***} \in X^{***}$ for which $(\widehat{T})^*(x^{***}) \in X^*$.
	\end{itemize}
	Now, applying once again \cite[Lemma 2.5]{OP} for $S \in \mathcal{F}(X^*)$, we can find $R \in \mathcal{F}(X)$ such that the following further properties are satisfied:
	\begin{itemize}
		\setlength\itemsep{0.2cm}
		\item[(iv)] $| \|R\| - \|S\| | < \frac{\eps}{2}$,
		\item[(v)] $\ran R^* = \ran S$, and 
		\item[(vi)] $R^{**} x^{**} = S^* x^{**}$ for every $x^{**} \in X^{**}$ for which $S^* x^{**} \in X$. 
	\end{itemize}
	By (iii), we have in particular that $S^*(x_i) = \widehat{T}(x_i) = T(x_i) \in X$ for every $i = 1, \ldots, N$. Thus, by using now (vi), we get that $R(x_i) = R^{**}(x_i) = S^*(x_i)$ for every $i=1, \ldots, N$. It follows that $R(x_i) = T(x_i)$ for every $i = 1, \ldots, N$ and so (b) is proved. Now, by using (iv) and (i), we have that
	\begin{equation*}
	\|R\| < \|S\| + \frac{\eps}{2} < \|\widehat{T}\| + \eps \leq 1 + \eps
	\end{equation*}
	and this proves (a). It remains to prove (c), that is, there exists $x_0 \in S(B_X, x^*, \eta)$ such that $\|R(x_0) - x'\| < \eps$. Indeed, by using (iii) we have that $S^*(u) = \widehat{T}(u) = x' \in B_X$ and then, by (vi) again, we get that $R^{**}(u) = S^*(u) = x'$. Let $(u_{\alpha}) \subseteq B_X$ be such that $u_{\alpha} \stackrel{w^*}{\longrightarrow} u$. Passing to a subnet (if it is necessary), we may (and we do) assume that $u_{\alpha} \in S(B_X, x^*, \eta)$ for all $\alpha \in I$. On the other hand, since $R^{**}$ is $w^*$-$w^*$ continuous, then we have that
	\begin{equation*}
	R(u_{\alpha}) = R^{**}(u_{\alpha}) \stackrel{w^*}{\longrightarrow} R^{**}(u) = x'
	\end{equation*}
	Since $R(u_{\alpha}) \in X$ and $x' \in X$, we have that in fact $R(u_{\alpha}) \stackrel{w}{\longrightarrow} x'$. Therefore,
	\begin{equation*}
	x' \in \overline{ \{R(u_{\alpha}): \alpha \in I \} }^{w} \subseteq \overline{\co} \{ R (u_{\alpha}): \alpha \in I \} \subset \overline{R(\co \{u_{\alpha}: \alpha \in I\})}.
	\end{equation*}	
	This implies that there exists $x_0 \in \co \{u_{\alpha}: \alpha \in I\}$ such that $\|R(x_0) - x'\| < \eps$. Since $u_{\alpha} \in S(B_X, x^*, \eta)$ for every $\alpha \in I$, we conclude that $x_0 \in S(B_X, x^*, \eta)$ and this proves (c) and we are done.
\end{proof}


\end{document}